\documentclass[12pt]{amsart}
\usepackage[cp850]{inputenc}
\usepackage[english]{babel}
\usepackage[curve]{xypic}%
\usepackage{graphics}    %
\RequirePackage{amssymb} 
\RequirePackage{amsopn}  
\RequirePackage{amsbsy}  
\RequirePackage{amsmath} 

\addtolength{\textwidth}{1.25in}     
\addtolength{\hoffset}{-.625in}      
\setlength{\parskip}{0.5ex} \linespread{1.125}
\addtolength{\textheight}{.5in}     
\addtolength{\voffset}{-.5in}       
\vfuzz2pt 
\hfuzz2pt 
\usepackage{enumerate}


\newtheorem{theorem}{Theorem}[section]
\newtheorem{proposition}[theorem]{Proposition}
\newtheorem{lemma}[theorem]{Lemma}
\newtheorem{corollary}[theorem]{Corollary}
\theoremstyle{definition}
\newtheorem{definition}[theorem]{Definition}
\newtheorem{example}[theorem]{Example}

\newtheorem{remark}[theorem]{Remark}

\numberwithin{equation}{section}

\newcommand{\balf}
 {\renewcommand{\theenumi}{(\alph{enumi})}\renewcommand{\labelenumi}{\theenumi}\begin{enumerate}
}
\newcommand{\ealf}
 {\end{enumerate}\renewcommand{\theenumi}{\arabic{enumi}}\renewcommand{\labelenumi}{\theenumi.}}
\newcommand{\bara}
 {\renewcommand{\theenumi}{(\arabic{enumi})}\renewcommand{\labelenumi}{\theenumi}\begin{enumerate}
}
\newcommand{\eara}
 {\end{enumerate}\renewcommand{\theenumi}{\arabic{enumi}}\renewcommand{\labelenumi}{\theenumi.}}
\newcommand{\brom}
 {\renewcommand{\theenumi}{(\roman{enumi})}\renewcommand{\labelenumi}{\theenumi}\begin{enumerate}
}
\newcommand{\erom}
 {\end{enumerate}\renewcommand{\theenumi}{\arabic{enumi}}\renewcommand{\labelenumi}{\theenumi.}}
\def\cuadro #1  {\framebox{\begin{minipage}[t]{123mm} #1\end{minipage}}}


\DeclareMathOperator{\Ker}     {Ker}%
\DeclareMathOperator{\length}  {length}%
\DeclareMathOperator{\botast}  {\bot{C^\ast}}%
\DeclareMathOperator{\botc}    {\bot{C}}%

\newcommand{\C}[2]{{#1}\cdot{#2}\cdot{#1}}
\newcommand{\eC}[1]{{e}\cdot{#1}\cdot{e}}

\usepackage{color}


\begin{document}
\title{Prime path coalgebras}
\author{P. Jara}%
 \address{Department of Algebra. University of Granada. 18071--Granada. SPAIN}%
 \email{pjara@ugr.es}%
 \urladdr{http://www.ugr.es/local/pjara}%
 \thanks{Supported by DGES MTM2004-08125, MTM2007-66666, FQM-266.}%
\author{L. Merino}
 \address{Department of Algebra. University of Granada. 18071--Granada. SPAIN}%
 \email{lmerino@ugr.es}%
\author{G. Navarro}
 \address{Department of Computer Sciences and AI. University of Granada.
18071--Granada. SPAIN}%
 \email{gnavarro@ugr.es}%
\author{J. F. Ruiz}
 \address{Department of Mathematics. University of Jaén. 23071--Jaén. SPAIN}%
 \email{jfruiz@ujaen.es}%

\subjclass[2000]{16W30, 16G10}%
\date{\today}%
\keywords{Path coalgebra, pointed coalgebra, prime coalgebra}%

\maketitle

\begin{abstract}
We use prime coalgebras as a generalization of simple coalgebras, and observe that
prime subcoalgebras represent the structure of the coalgebra in a more efficient way
than simple coalgebras. In particular, in this work we focus our attention on the
study and characterization of prime subcoalgebras of path coalgebras of quivers and,
by extension, of prime pointed coalgebras.
\end{abstract}

\section{Preliminaries}

It is well known that every coalgebra, with separable coradical, is Morita--Takeuchi
equivalent to a subcoalgebra of a path coalgebra,
see\cite{Chin/Montgomery:1997,Nichols:1978}. From this result path coalgebras of
oriented graphs became important objects of study in the new developments in
Coalgebra Theory. Let us recall briefly some definition and facts involving them.

Following \cite{gabriel}, a \emph{quiver} (\emph{or oriented graph}), $G=(V,E,s,t)$,
is given by
 two sets
 $V$, the set of vertices, and
 $E$, the set of arrows, and
 two maps
 $s$, $t:E \rightarrow V$ providing each arrow $x$ with its source $s(x)$ and its
tail $t(x)$.
Sometimes we represent the arrow $x$ as $x:s(x)\to{t(x)}$.

A \emph{subquiver} of a quiver $G$ is a quiver $G'=(V',E',s',t')$ such that
$V'\subseteq{V}$, $E'\subseteq{E}$ and $s'=s\mid_{E'}$, $t'=t\mid_{E'}$.

A \emph{path} $p$ in $G$ is a finite sequence of arrows $p=x_1\cdots{x_n}$ in such a
way that
$t(x_{i})=s(x_{i+1})$ for every $i=1, \dots ,n-1$. In this case we set $s(p)=s(x_1)$
and
$t(p)=t(x_n)$. The \emph{length} of the path $p$ is the number of arrows which compose
it. For completeness, we consider vertices as trivial paths or paths of length
zero. For any trivial path $a$, we put $s(a)=a=t(a)$ and, for any path $p$ such that
$s(p)=a$ (resp. $t(p)=a$) we identify the concatenation $ap$ and $p$ (resp. $pa$ and
$p$).

A path of length $l\geq 1$ is called a \emph{cycle} whenever its source and its tail
coincide.

We also need the  notion of "\emph{unoriented path}" or walk. To each arrow
$x:a\rightarrow b$ in $G$, we associate a formal reverse $x^{-1}:b\rightarrow{a}$.
A \emph{walk} from a vertex $a$ to a vertex $b$ is a nonempty sequence of
arrows $x_1$, \ldots, $x_r$ such that, for every index $i$, there exists
$\varepsilon_i\in\{-1,1\}$ in such a way that
$x_1^{\varepsilon_1}\cdots{x_r^{\varepsilon_r}}$ is a path from $a$ to $b$.
Two vertices $a$ and $b$, of the quiver $G$, are said to be \emph{connected} if there
exists a walk from $a$ to $b$.
The quiver $G$ is called \emph{connected} if every two vertices of $G$ are connected.
The \emph{connected component} of a vertex $a\in{V}$ is the biggest connected subquiver
of $G$ containing $a$.
A quiver $G$ is said to be \emph{strongly connected} if, for each two vertices $a$
and $b$,
there exists a path in $G$ with source $a$ and tail $b$.

Let $p$ be a path in $G$. A path $q$ is a \emph{subpath} of $p$ if there exist paths
$p_1$ and
$p_2$ such that $p$ is the concatenation $p=p_1qp_2$.
If $q$ is a subpath of $p$, we write $q\preceq{p}$, and if, in addition, $q\neq{p}$,
we write $q\prec{p}$.
Alternatively, when $q$ is a subpath of $p$, we  say that $p$  \emph{passes
through} $q$. Note that it can happen that $p$ contains $q$, as subpath, more than
once; we denote $q\prec^n{p}$ if there exist paths $p_1$, \ldots , $p_{n+1}$ in such
a way that $p$ is equal to the concatenation $p_1 q p_2 q \cdots p_n q p_{n+1}$.

\vskip.5cm

In the following we assume that the reader is familiar with Coalgebra Theory.
Anyway we take \cite{ABE:1977,MONTGOMERY:1993,SWEEDLER:1969} as basic
references for coalgebras and comodules, and we refer the reader to them for undefined
terms.

\vskip.5cm

Let $G$ be a quiver and let $k$ be a field. The \emph{path coalgebra} of $G$ is the
$k$--vector space $PC(G)$, with basis the set of all paths, equipped with the following
comultiplication and counit:

For any vertex $a$:
$$
 \Delta(a)=a\otimes{a}\mbox{ and }\varepsilon(a)=1.
$$

For any non zero length path $p=x_1\cdots{x_n}$:
$$
\begin{array}{ll}
 \Delta(p)
 &=s(p)\otimes{p}+\sum_{i=1}^{n-1}{x_1}\cdots{x_{i}\otimes{x_{i+1}}\cdots{x_n}}+p\otimes{t(p)}\\
 &=\sum_{p_1p_2=p}p_1\otimes{p_2},\mbox{ and }\\
 \varepsilon(p)
 &=0.
\end{array}
$$

As a consequence of this definition we have the following facts:
\begin{enumerate}[(1)]
\item
$(PC(G),\Delta,\varepsilon)$ is a pointed coalgebra, being the simple subcoalgebras
generated by the vertices.
\item
Any pointed coalgebra $C$ is isomorphic to a subcoalgebra of a certain path coalgebra,
see \cite{Chin/Montgomery:1997,Woodcock:1997}.
\item
The path coalgebra $PC(G)$ can be also constructed as $T_{k V}(kE)$, the cotensor
coalgebra over $kV$ defined by $kE$, see
\cite{Nichols:1978,Jara/Llena/Merino/Stefan:unp}.
\end{enumerate}

As it is showed in the literature, simple subcoalgebras only control the vertices of
the quiver,
however they do not control the arrows. See Example \ref{ex:15} below. For this
reason we are interested in a generalization of simple subcoalgebras: the prime
subcoalgebras. To introduce them, let us first recall the concept of wedge product.

Let $A$ and $B$ be two subcoalgebras of a coalgebra $C$. The \emph{wedge product}
\cite{SWEEDLER:1969},
$A\wedge^CB$, of $A$ and $B$ in $C$ is defined as:
\[
\begin{array}{lll}
 A\wedge^CB
 &=\Ker(C\stackrel{\Delta}{\longrightarrow}C\otimes{C}
   \stackrel{pr\otimes{pr}}{\longrightarrow}\frac{C}{A}\otimes\frac{C}{B})\\
 &=\Delta^{-1}(C\otimes{B}+A\otimes{C})\\
 &=(A^{{\botast}}B^{{\botast}})^{{\botc}}.
\end{array}
\]
It is known that $A\wedge^CB$ is a subcoalgebra of $C$ containing $A+B$ and, in
general, it happens that $A\wedge^CB\neq{B\wedge^CA}$.

\begin{definition}\label{de:030527}
A coalgebra $C$ is said to be \emph{prime} if, for any subcoalgebras $A$ and $B$ of $C$
such that $C=A\wedge^CB$, we have either $C=A$ or $C=B$.
\end{definition}

\begin{lemma}\label{le:030527a}
Let $D$ be a subcoalgebra of a coalgebra $C$, the following statements are
equivalent:
\begin{enumerate}[(a)]
\item
$D$ is a prime coalgebra.
\item
For any subcoalgebras $A$ and $B$ of $C$ such that $D\subseteq{A\wedge^C B}$, we have
either $D\subseteq{A}$ or $D\subseteq{B}$.
\end{enumerate}
\end{lemma}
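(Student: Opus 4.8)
The strategy is to reduce both implications to the defining property of a prime coalgebra applied \emph{inside} $D$, via the following compatibility of the wedge product with passage to a subcoalgebra: for arbitrary subcoalgebras $A,B$ of $C$,
\[
(A\cap D)\wedge^{D}(B\cap D)=D\cap(A\wedge^{C}B).
\]
I would prove this using the algebra description of the wedge recalled above, namely $A\wedge^{C}B=(A^{\perp}B^{\perp})^{\perp}$ with the annihilators taken in $C^{*}$ and then back in $C$. The inclusion $\iota\colon D\hookrightarrow C$ is a morphism of coalgebras, so its transpose $\rho\colon C^{*}\twoheadrightarrow D^{*}$, $\rho(f)=f\circ\iota$, is a surjective morphism of algebras whose kernel is the annihilator of $D$ in $C^{*}$. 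For a subcoalgebra $A$ of $C$ one checks directly that $\rho$ carries the annihilator of $A$ in $C^{*}$ onto the annihilator of $A\cap D$ in $D^{*}$ (a functional on $D$ that vanishes on $A\cap D$ extends to a functional on $C$ vanishing on $A$), and likewise for $B$; since $\rho$ is multiplicative, the product of these two ideals of $D^{*}$ is $\rho$ applied to the product of the corresponding ideals of $C^{*}$. Taking annihilators back in $D$, and using that $\rho(f)$ and $f$ agree on elements of $D$, turns $\big((A\cap D)^{\perp}(B\cap D)^{\perp}\big)^{\perp}$ into $D\cap(A^{\perp}B^{\perp})^{\perp}$, which is the asserted identity.

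Granting this identity, $(b)\Rightarrow(a)$ is immediate. Let $A,B$ be subcoalgebras of $D$ with $D=A\wedge^{D}B$. Applying $\Delta$ gives $\Delta(A\wedge^{D}B)\subseteq D\otimes B+A\otimes D\subseteq C\otimes B+A\otimes C$, so $D=A\wedge^{D}B\subseteq A\wedge^{C}B$; by (b) either $D\subseteq A$ or $D\subseteq B$, and since $A,B\subseteq D$ this means $D=A$ or $D=B$, i.e. $D$ is prime. For $(a)\Rightarrow(b)$, assume $D$ is prime and let $A,B$ be subcoalgebras of $C$ with $D\subseteq A\wedge^{C}B$. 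Then $A\cap D$ and $B\cap D$ are subcoalgebras of $D$, and the identity above gives $(A\cap D)\wedge^{D}(B\cap D)=D\cap(A\wedge^{C}B)=D$. Primeness of $D$ forces $D=A\cap D$ or $D=B\cap D$, that is, $D\subseteq A$ or $D\subseteq B$, which is exactly (b).

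The only genuinely substantive point is the compatibility identity, and within it three routine-but-not-quite-automatic checks: that the transpose of $\iota$ is surjective with kernel the annihilator of $D$; that it sends $A^{\perp}$ \emph{exactly} onto $(A\cap D)^{\perp}$ (rather than onto something larger); and that products of ideals are preserved by this algebra map. Everything after that is a direct unwinding of Definition \ref{de:030527}.
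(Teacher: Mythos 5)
Your proof is correct and follows essentially the same route as the paper: both implications are reduced to the compatibility of the wedge with passage to $D$, namely $(A\cap D)\wedge^{D}(B\cap D)=D\cap(A\wedge^{C}B)$, and then primeness of $D$ is applied to $A\cap D$ and $B\cap D$ (resp.\ the trivial inclusion $X\wedge^{D}Y\subseteq X\wedge^{C}Y$ is used for the converse). The only difference is that the paper invokes this compatibility without proof, whereas you justify it via the surjective algebra map $C^{*}\to D^{*}$ and the description $A\wedge^{C}B=(A^{\perp}B^{\perp})^{\perp}$ --- a correct supplement to, rather than a departure from, the paper's argument.
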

\begin{proof}
(a) $\Rightarrow$ (b). Let $A$ and $B$ be subcoalgebras of $C$ such that
$D\subseteq{A\wedge^C B}$, then
$D=(A\wedge^CB)\cap{D}\subseteq(A\cap{D})\wedge^D(B\cap{D})$. Hence either $D=A\cap{D}$
or $D=B\cap{D}$.
\newline
(b) $\Rightarrow$ (a). Let $X$ and $Y$ be subcoalgebras of $D$ such that
$D=X\wedge^D Y=
(X\wedge^C Y) \cap{D}$, then $D\subseteq{X\wedge^C Y}$, and  we have either $D=X$ or
$D=Y$.
\end{proof}

\begin{remark} \label{tesistak}
In  Takeuchi's thesis \cite[1.4.2]{Takeuchi:1974} appears the concept of
\emph{coprime subcoalgebra} of a cocommutative coalgebra: a subcoalgebra $D$ of $C$ is
said coprime if it satisfies the condition (b) of the previous Lemma. He proved that a
subcoalgebra $D$, of a cocommutative coalgebra $C$, is a coprime subcoalgebra of $C$ if
and only if $D^{\botast}$ is a prime ideal of the commutative algebra $C^*$.
Actually, his proof is also valid for non necessarily cocommutative coalgebras.
\end{remark}

Let us recall that a coalgebra $C$ is called \emph{indecomposable} if there are no two
non trivial proper subcoalgebras $D_1$ and $D_2$ such that $C=D_1\oplus{D_2}$. It is
well
known, see \cite{Montgomery:1995}, that the path coalgebra $PC(G)$ is indecomposable if
and only if the quiver $G$ is connected.

\begin{lemma}
The following statements hold.
\begin{enumerate}[(1)]
 \item Every simple coalgebra is prime.
 \item Any prime coalgebra is indecomposable.
 \item Every finite-dimensional prime coalgebra is simple
\end{enumerate}
\end{lemma}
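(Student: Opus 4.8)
\emph{Parts (1) and (2).} These are short. For (1), recall that a simple coalgebra $C$ is nonzero and has only $0$ and $C$ as subcoalgebras; if $C=A\wedge^C B$ with $A\neq C$ and $B\neq C$, then necessarily $A=B=0$, so $A\wedge^C B=\Delta^{-1}(0)=\Ker\Delta=0$ (the counit axiom makes $\Delta$ injective), contradicting $C\neq 0$; hence $C=A$ or $C=B$. For (2), suppose $C=D_1\oplus D_2$ with $D_1,D_2$ nonzero subcoalgebras. Since each $D_i$ is a subcoalgebra, $\Delta(D_i)\subseteq D_i\otimes D_i$, and therefore
\[
\Delta(C)\subseteq D_1\otimes D_1+D_2\otimes D_2\subseteq D_1\otimes C+C\otimes D_2,
\]
which says exactly that $C=\Delta^{-1}(C\otimes D_2+D_1\otimes C)=D_1\wedge^C D_2$. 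As $C$ is prime this forces $C=D_1$ or $C=D_2$, impossible since each summand is proper. So $C$ is indecomposable.

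\emph{Part (3): the plan.} I would dualize. For $\dim_k C<\infty$, the map $D\mapsto D^{\bot}$ (orthogonal taken in $C^{\ast}$) is an inclusion-reversing bijection between the subcoalgebras of $C$ and the two-sided ideals of $C^{\ast}$, sending $C$ to $0$ and $0$ to $C^{\ast}$; and by the last line of the displayed formula defining the wedge in the Preliminaries, $(A\wedge^C B)^{\bot}=A^{\bot}B^{\bot}$, the product of ideals. Reading Definition~\ref{de:030527} through this dictionary, $C$ is prime if and only if $IJ=0\Rightarrow I=0$ or $J=0$ for all two-sided ideals $I,J$ of $C^{\ast}$, i.e. if and only if $C^{\ast}$ is a prime ring. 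It then remains to note that a finite-dimensional prime ring is simple: its Jacobson radical $J$ is nilpotent, and if $J\neq 0$ then $J^{m-1}\cdot J=0$ with $J^{m-1}\neq 0$ violates primeness, so $C^{\ast}$ is semisimple; a semisimple ring with two or more Wedderburn blocks is not prime; hence $C^{\ast}$ is simple, so $C$ has no proper nonzero subcoalgebras, that is, $C$ is simple.

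\emph{An internal alternative, and the main obstacle.} One can instead stay inside $C$: the coradical filtration satisfies $C_n=C_{n-1}\wedge^C C_0$ and is exhaustive, so in the finite-dimensional case $C=C_N$ for a least $N$; if $N\geq 1$ then $C_{N-1}\neq C$ together with primeness forces $C=C_0$, contradicting $C_0\subseteq C_{N-1}\subsetneq C$, so $C=C_0$ is cosemisimple, and then part~(2) (indecomposability) forces the decomposition $C_0=\bigoplus S_i$ into simple subcoalgebras to have a single term, i.e. $C$ is simple. Parts~(1) and~(2) being routine, the real content of the lemma lies in~(3), and the main obstacle is precisely the passage from the combinatorial wedge condition of primeness to a structural statement — carried out either through the subcoalgebra/ideal correspondence together with the classical fact that an Artinian prime ring is simple, or through the wedge presentation of the coradical filtration. (Throughout, the degenerate case $C=0$ is excluded by the standing convention that simple and prime coalgebras are nonzero.)
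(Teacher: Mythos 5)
Your parts (1) and (2) are correct and essentially coincide with the paper's (the paper obtains (2) from $C=A+B\subseteq A\wedge^C B$, which is the content of your computation). For (3) your principal route is genuinely different: you dualize, using the inclusion-reversing bijection $D\mapsto D^{\bot}$ between subcoalgebras of a finite-dimensional $C$ and two-sided ideals of $C^{\ast}$, together with $(A\wedge^C B)^{\bot}=A^{\bot}B^{\bot}$ (valid here because in finite dimensions every subspace is closed under double orthogonals), so that primeness of $C$ translates into primeness of the Artinian ring $C^{\ast}$, and the classical fact that a prime Artinian ring is simple finishes the argument; this is sound, and it makes the lemma transparent as the coalgebra counterpart of ``finite-dimensional prime algebra $=$ simple algebra,'' in the spirit of the Takeuchi remark quoted in the paper, at the price of being confined to finite dimensions from its very first step. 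The paper instead stays inside the coalgebra: writing $R$ for the coradical, it uses $D=\wedge^{\infty}R$ and finite-dimensionality to get $D=\wedge^{n}R$, then (implicitly, via primeness applied to $D=(\wedge^{n-1}R)\wedge^D R$) collapses this to $D=R$, and finally invokes part (2) to cut the cosemisimple $D$ down to a single simple summand --- which is exactly your ``internal alternative,'' so you have in effect also reproduced the paper's proof, and in a form that makes explicit the use of primeness that the paper leaves tacit in the step ``thus $D=R$.'' Both arguments are correct; the dual one leans on ring theory and the subcoalgebra--ideal dictionary, while the paper's wedge/coradical argument stays in the coalgebra language used throughout the rest of the article.
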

\begin{proof}
The first assertion is trivial. For the second one, if $C$ is a prime
coalgebra and $C=A\oplus{B}$ is a direct sum of two subcoalgebras $A$, $B\subseteq{C}$,
then $C=A+B\subseteq{A\wedge{B}}$. Hence either $C=A$ or $C=B$. Finally, suppose
that $D$
is prime and finite-dimensional and denote by $R$ the coradical of $D$. Then $D=\wedge
^{\infty}R$, see \cite{SWEEDLER:1969}, and being $D$ finite-dimensional, $D=\wedge
^{n}R$
for some $n$. Thus $D=R$, and it is cosemisimple. By (2), since $D$ is indecomposable,
then $D$ is simple.
\end{proof}

Let us consider the following example in which we obtain that prime coalgebras give us
more information than simple coalgebras in order to describe a coalgebra.

\begin{example}
Let $L$ be a finite-dimensional Lie algebra generated by $x_1$, \ldots, $x_n$, and
consider the universal enveloping algebra, say $C=U(L)$. It is well known that $C$
has a
coalgebra structure in which $1$ is the unique group like element and every element
$x_i$ is primitive, i.e. $\Delta(x_i)=x_i \otimes 1 + 1 \otimes x_i$, for any index
$i$.
Thus $k1$ is the only simple subcoalgebra of $C$. Nevertheless it is no difficult to
prove that for every index $i=1,\ldots,n$ the vector space $D_i$, generated by all the
powers of $x_i$, is a prime subcoalgebra of $C$ and $C=D_1+\cdots+{D_n}$.
\end{example}

\section{Subcoalgebras of a Path coalgebra}\label{se:2}

Throughout this section we consider a quiver $G$. We study properties relative
to elements of subcoalgebras of $PC(G)$ and the relationship with paths and vertices.

\begin{proposition}\label{pr:030411}
Let $D$ be a subcoalgebra of $PC(G)$, and let $p$ be a path in $D$. If $q\preceq{p}$,
then $q$ belongs to $D$.
\end{proposition}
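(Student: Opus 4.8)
The plan is to exploit the comultiplication formula for path coalgebras, namely $\Delta(p) = \sum_{p_1 p_2 = p} p_1 \otimes p_2$, together with the fact that $D$ is a subcoalgebra, hence $\Delta(D) \subseteq D \otimes D$. Since $\{p_1 \otimes p_2\}$ are linearly independent as $(p_1,p_2)$ ranges over all factorizations $p_1 p_2 = p$, being part of the tensor-product basis $\{u \otimes v\}$ of $PC(G) \otimes PC(G)$, the condition $\Delta(p) \in D \otimes D$ will force each such $p_1$ and $p_2$ individually into $D$; this is the key extraction step. In particular, every \emph{prefix} and every \emph{suffix} of $p$ lies in $D$.

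From there I would get an arbitrary subpath $q \preceq p$ into $D$ in two steps. Write $p = p_1 q p_2$. First apply the prefix observation: $p_1 q$ is a prefix of $p$, so $p_1 q \in D$. Then apply the suffix observation inside $D$: since $p_1 q \in D$ and $D$ is a subcoalgebra, $q$ is a suffix of the path $p_1 q \in D$, so $q \in D$. (One must note $q$ is itself a path here, which it is by the definition of subpath, including the degenerate cases where $q$ is a vertex or equals $p$.)

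The main technical point to get right — the only place where something could go wrong — is the claimed linear independence of the tensors $p_1 \otimes p_2$ appearing in $\Delta(p)$, and more precisely the argument that if $\sum_j \lambda_j (a_j \otimes b_j) = \Delta(p) \in D \otimes D$ with the $a_j \otimes b_j$ distinct basis vectors of $PC(G)\otimes PC(G)$ and all $\lambda_j \neq 0$, then each $a_j, b_j \in D$. This is a standard fact about subspaces: choosing a basis of $D$ and extending it to a basis of $PC(G)$, an element of $D \otimes PC(G)$ written in the induced basis of $PC(G) \otimes PC(G)$ has nonzero coefficients only on $d \otimes u$ with $d$ a basis vector of $D$; intersecting $D \otimes PC(G)$ with $PC(G) \otimes D$ gives $D \otimes D$. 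Since the paths form a basis of $PC(G)$ but need not form a basis of $D$, I would phrase this cleanly as: $\Delta(p) \in (D \otimes PC(G)) \cap (PC(G) \otimes D) = D \otimes D$, and each path $p_1$ (resp. $p_2$) occurring with nonzero coefficient in a tensor $p_1 \otimes p_2$ of $\Delta(p)$ must therefore lie in $D$. Everything else is immediate from the two-step prefix/suffix argument above.
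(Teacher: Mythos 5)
Your proof is correct and follows essentially the same route as the paper's: from $\Delta(p)=\sum_{p_1p_2=p}p_1\otimes p_2$ and $\Delta(D)\subseteq D\otimes D$ one extracts that every prefix and suffix of a path in $D$ again lies in $D$, and then an arbitrary subpath $q$ of $p=p_1qp_2$ is reached in two steps (the paper first takes the suffix $q p_2$ and then its prefix $q$, while you take the prefix $p_1q$ first --- an immaterial difference). One small caution on your ``key extraction step'': distinctness of the basis tensors $p_1\otimes p_2$ alone would not suffice (e.g.\ $(a_1+a_2)\otimes b\in D\otimes D$ does not force $a_1\in D$); what makes the extraction valid is that the complementary factors --- the suffixes (resp.\ prefixes) of the fixed path $p$ --- are pairwise distinct paths of distinct lengths, hence linearly independent, so applying $\mathrm{id}\otimes f$ (resp.\ $f\otimes\mathrm{id}$) with $f$ the dual functional of one of them lands the corresponding component in $D$.
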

\begin{proof}
Indeed, let us denote $p=x_1\cdots{x_r}$ and $q=x_i\cdots{x_{i+s}}$. Then in
$\Delta(p)$
the summand
 $x_1\cdots{x_{i-1}}\otimes{x_i}\cdots{x_r}$
appears. Since $D$ is a subcoalgebra of $PC(G)$, then we have
 $x_i\cdots{x_r}$ is in $D$.
In an analogous way, from $x_i\cdots{x_r}\in{D}$, we may deduce that
$x_i\cdots{x_{i+s}}$
belongs to $D$.
\end{proof}

The following is another closure property for elements in a subcoalgebra.

\begin{lemma}\label{le:22}
Let $D$ be  a subcoalgebra of $PC(G)$, and let $d=\sum_{i=1}^s\lambda_ip_i$ be a non
zero
element in $D$. Given a path $p$, let us assume that $\{1, \dots , r\}$,  $r \leq s$ is
\emph{the set of all} indices $i$ such that there exists a subpath $p'_{i}$ of $p_i$
verifying $p_{i}=pp'_{i}$. Then  $\sum_{j=1}^r\lambda_i p'_{i} \in{D}$.
\end{lemma}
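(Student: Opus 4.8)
The plan is to extract the claimed element by applying the comultiplication to $d$ and then post-composing with a suitable linear functional to ``left-truncate'' each path $p_i$ by the prefix $p$. First I would write out
\[
\Delta(d)=\sum_{i=1}^s\lambda_i\Delta(p_i)=\sum_{i=1}^s\lambda_i\sum_{p_i=q_iq'_i}q_i\otimes q'_i,
\]
using the formula $\Delta(p)=\sum_{p_1p_2=p}p_1\otimes p_2$ for a path. Since $D$ is a subcoalgebra, $\Delta(d)\in D\otimes D$; I will instead exploit the weaker fact that $\Delta(d)\in PC(G)\otimes D$, i.e.\ in every tensorand written in the path basis the right factor lies in $D$. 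Now consider the functional $\xi=p^{*}\in PC(G)^{*}$ dual to the basis path $p$ (with $\xi$ vanishing on all other basis paths). Applying $(\xi\otimes\mathrm{id})$ to $\Delta(d)$ picks out exactly those summands $q_i\otimes q'_i$ with $q_i=p$; by hypothesis these are precisely the decompositions $p_i=pp'_i$ for $i\in\{1,\dots,r\}$ (for $i>r$ no prefix of $p_i$ equals $p$, and for $i\le r$ the prefix $p$ occurs exactly once as an initial segment, so each such $p_i$ contributes a single term). Hence
\[
(\xi\otimes\mathrm{id})\Delta(d)=\sum_{j=1}^{r}\lambda_j\,p'_j,
\]
and since $(\xi\otimes\mathrm{id})\Delta(d)$ is the image of $d\in D$ under the right $PC(G)^{*}$-comodule-type action $\xi\rightharpoonup d$, which preserves $D$ (as $D$ is a subcoalgebra, $(\xi\otimes\mathrm{id})(\Delta(D))\subseteq D$), we conclude $\sum_{j=1}^r\lambda_j p'_j\in D$. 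This is the element claimed (note the statement's index on $\lambda$ should read $\lambda_j$).

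I should double-check one subtlety, which I expect to be the only real point requiring care: whether a path $p_i$ with $i\le r$ can have $p$ as an initial segment in more than one way, or whether for some $i>r$ the path $p$ could nonetheless arise as a prefix $q_i$. A prefix $q_i$ in the decomposition $p_i=q_iq'_i$ is determined by its length, and $q_i=p$ forces $q_i$ to be the unique initial segment of $p_i$ of length $\mathrm{length}(p)$ that equals $p$; so for each $i$ there is at most one such decomposition, and it exists iff $p_i=pp'_i$ for some (necessarily unique) path $p'_i$, which by the hypothesis happens exactly for $i\in\{1,\dots,r\}$. One must also allow the degenerate cases where $p$ is a trivial path (a vertex) or where $p'_i$ is trivial; in the first case $\xi\rightharpoonup d$ simply selects the $p_i$ with source $p$ and returns $\sum\lambda_i p_i$ over those, consistent with the convention $s(p_i)p_i=p_i$; in the second case $p'_i=t(p_i)$ is a vertex, again handled by the path-concatenation conventions fixed in the preliminaries. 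With these conventions the computation above goes through verbatim, completing the proof.
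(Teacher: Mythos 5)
Your proof is correct and is essentially the paper's own argument: both apply the dual functional of the path $p$ to the left tensor factor of $\Delta(d)$ (the hit action $d\cdot f$ in the paper's notation) and use that $D$, being a subcoalgebra, is stable under this action, so that exactly the summands with prefix $p$ survive and yield $\sum_{j=1}^{r}\lambda_j p'_j\in D$. Your extra verification that each $p_i$ admits $p$ as an initial segment at most once, and your handling of the trivial-path cases, only make explicit what the paper leaves implicit.
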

\begin{proof}
From the hypothesis we may see that in $\Delta(d)$ appears a summand
$p\otimes(\sum_{i=1}^s\lambda_i p'_{i})$ and that $p$ does not appear in the first
component of the remaining summands. Let us consider a linear map $f\in (PC(G))^*$
defined by $f(p)=1$ and $f(q)=0$ for any path $q\neq{p}$. Then
$d\cdot{f}=f(p)\sum_{i=1}^s\lambda_ip'_{i}=\sum_{i=1}^s\lambda_ip'_{i}$ is an
element of
$D$.
\end{proof}

Any element $d\in{PC(G)}$ can be written uniquely as a $k$--linear combination of
paths,
say $d=\sum_{i=1}^s\lambda_ip_i$.
If $p=x_1\cdots{x_r}$ is a path, let us define
$V(p)=\{s(x_1),\ldots,s(x_{r}),t(x_r)\}$,
and extend this definition to elements of $PC(G)$: for any $d\in{PC(G)}$ such that
$d=\sum_{i=1}^s\lambda_ip_i$, with $\lambda_i\neq0$, we define
$V(d)=\cup\{V(p_i)\mid\;i=1,\ldots,s\}$.
Again we may extend this definition to any non empty subset $X\subseteq{PC(G)}$ by
setting $V(X)=\cup\{V(d)\mid\;d\in{X}\}$.

With this notation we may state and prove the following result.

\begin{proposition}\label{pr:22}
If $D$ is a subcoalgebra of $PC(G)$, then $V(D)\subseteq{D}$.
\end{proposition}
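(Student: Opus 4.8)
The goal is to show that for any subcoalgebra $D$ of $PC(G)$, every vertex appearing in $V(D)$ actually lies in $D$. By the definition of $V(D)$ it suffices to fix a single element $d = \sum_{i=1}^s \lambda_i p_i \in D$ (with $\lambda_i \neq 0$) and a single path $p_{i_0}$ among the $p_i$, and to show that every vertex in $V(p_{i_0}) = \{s(x_1),\dots,s(x_r),t(x_r)\}$ (where $p_{i_0} = x_1\cdots x_r$) belongs to $D$.

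The key reduction is Proposition \ref{pr:030411}: since $p_{i_0} \in D$ would immediately give, by taking subpaths, that each $s(x_j)$ and $t(x_r)$ (these are trivial subpaths of $p_{i_0}$) lies in $D$, the whole problem reduces to showing that each path $p_i$ occurring in a nonzero element of $D$ with nonzero coefficient is itself in $D$. So the heart of the matter is: \emph{an element of $D$ has all of its "monomials'' (paths) in $D$}. To extract a single path $p_{i_0}$ from $d$, I would use the same dualization trick as in Lemma \ref{le:22}: apply suitable functionals $f \in (PC(G))^*$ to $d$ via the left/right actions $f\cdot d$ and $d\cdot f$ to peel off paths one at a time. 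Concretely, writing $\Delta(p_i) = \sum_{ab = p_i} a \otimes b$, one can use functionals dual to well-chosen proper subpaths to isolate, inside $D$, a combination of the "complementary'' pieces; iterating (or combining a left action and a right action) should let one isolate $\lambda_{i_0} p_{i_0}$, hence $p_{i_0}$ itself since $\lambda_{i_0}\neq 0$. In fact Lemma \ref{le:22} already does exactly this kind of peeling from the left; a symmetric version peels from the right, and the two together pin down an individual path.

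The main obstacle is combinatorial bookkeeping: when several of the $p_i$ share initial or terminal subpaths, a single functional will not separate $p_{i_0}$ from the others, so one must argue by induction — say on $\max_i \length(p_i)$, or on the number $s$ of terms. The cleanest route is probably: choose $p_{i_0}$ of maximal length; if it is a vertex we are done; otherwise write $p_{i_0} = x_1 q$ with $q$ a proper subpath, apply the functional $f$ dual to the vertex $s(x_1)$ acting appropriately to strip a trivial path, then apply Lemma \ref{le:22} with the single arrow $x_1$ to obtain $\sum_{j}\lambda_j q_j \in D$ where the sum runs over those $p_j$ beginning with $x_1$; this new element of $D$ has strictly smaller maximal length, so by induction all its constituent paths, in particular $q$, lie in $D$, and then $x_1 q = p_{i_0} \in D$ needs one more short argument (or one reverses the roles, stripping from the right). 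One then finishes with Proposition \ref{pr:030411} to get all vertices of $p_{i_0}$, and therefore all of $V(d) \subseteq D$; ranging over all $d \in D$ gives $V(D) \subseteq D$.

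I should note that an alternative, possibly slicker, organization is to prove directly that $D$ is spanned (as a $k$-vector space) by the set of paths it contains — this is a known feature of subcomodules of $PC(G)$ viewed as graded/filtered by path length — and then Proposition \ref{pr:22} is immediate from Proposition \ref{pr:030411}. Whichever packaging one uses, the only real content is the path-separation argument via functionals together with an induction on length, and no deep input beyond the two preceding results is required.
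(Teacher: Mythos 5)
Your reduction is based on a false claim, and the paper itself provides the counterexample. You reduce everything to showing that ``an element of $D$ has all of its monomials (paths) in $D$,'' i.e.\ that every path occurring with nonzero coefficient in some element of $D$ already lies in $D$ (equivalently, your alternative packaging: $D$ is spanned by the paths it contains). This is not true for subcoalgebras of $PC(G)$: in Example~\ref{ex:15} the subcoalgebra $D$ generated by $\alpha=xy+xt+zy+zt$ has basis $\{a,b,c,x+z,y+t,\alpha\}$, so the arrows $x,y,z,t$ and the length-two paths $xy,xt,zy,zt$ all occur in elements of $D$ but none of them belongs to $D$; indeed $D\subsetneqq k\cdot P(D)$ and $\dim D=6<11=\dim k\cdot P(D)$. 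Consequently your induction cannot close: the inductive hypothesis ``all constituent paths of the shorter element lie in $D$'' is false (e.g.\ $x+z\in D$ but $x\notin D$), and the final step ``$x_1q=p_{i_0}\in D$'' is false as well ($xy\notin D$ above). Proposition~\ref{pr:030411} is only applicable to paths that are themselves elements of $D$, which is exactly what you cannot guarantee.

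The reason the statement is nevertheless true is that \emph{vertices} are special: they are the only pieces one can peel off without dragging along the other paths that share a terminal or initial segment. This is what the paper's argument exploits. Given $d=\sum_i\lambda_ip_i\in D$ with $a=s(x_j)$ for $p_1=x_1\cdots x_r$, one first uses Lemma~\ref{le:22} to strip the initial segment $x_1\cdots x_{j-1}$, producing a new nonzero element of $D$ in which $a$ is the \emph{source} of one of the constituent paths; then one applies a functional $f\in PC(G)^*$ dual to a path of maximal length among those occurring (acting on the appropriate tensor leg of $\Delta$), so that the only surviving term is of the form $s(p)\otimes p$, and the output is a nonzero scalar multiple of the vertex $a$, hence $a\in D$. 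No claim that any nontrivial path of $d$ lies in $D$ is made, and none can be. Your dualization machinery (Lemma~\ref{le:22} plus left/right actions of functionals) is the right toolkit, but it must be aimed at extracting group-like elements only; as written, the heart of your proof would fail.
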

\begin{proof}
Let us consider a vertex $a\in{V(D)}$, then there is some element
$d=\sum_{i=1}^s\lambda_ip_i\in{D}$ such that $p_1=x_1\cdots{x_r}$ and $a=s(x_j)$, for
some $j=1,\ldots,r-1$. We treat the following two cases:
\newline\hspace*{.5cm}
(1) $a=s(x_1)$. Then we have the decomposition
$$
 \Delta(d)=a\otimes{d}+\mbox{\textit{other terms}}.
$$
If we define a linear map $f:PC(G)\longrightarrow{k}$ as $f(d)=1$ and $f(p)=0$
for any path $p$ such that
$$
 \length(p)<\max\{\length(p_i)\mid\;i=1,\ldots,t\}
$$
then $f\cdot{d}=a\in{D}$.
\newline\hspace*{.5cm}
(2) $a=s(x_j)$ for some $j=2,\ldots,r$. Then we consider the path $x_j\cdots{x_r}$. By
Lemma \ref{le:22}, we may assume that $a$ is in case (1), for a new non zero element in
$D$. Therefore $a\in{D}$.
\end{proof}

Next we obtain a key tool in this paper.

\begin{theorem}\label{th:22}
Let $D$ be  a subcoalgebra of $PC(G)$, then there exists a basis $B$ of $D$ such that
every basic element in $B$ is a linear combination of paths with common source and
common tail.
\end{theorem}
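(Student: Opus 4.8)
The plan is to build the desired basis by grouping the paths appearing in elements of $D$ according to their source and tail, and then showing that the ``homogeneous components'' of any element of $D$ with respect to this grouping already lie in $D$. For a vertex pair $(a,b)$, define for each $d=\sum_i\lambda_ip_i\in D$ the component $d_{a,b}=\sum_{\{i\,:\,s(p_i)=a,\ t(p_i)=b\}}\lambda_ip_i$. The main claim to establish is that $d_{a,b}\in D$ for every $d\in D$ and every pair $(a,b)$; once this is known, the sets $D_{a,b}:=\{d_{a,b}\mid d\in D\}$ are subspaces of $D$, their sum is all of $D$ (since $d=\sum_{a,b}d_{a,b}$), and the sum is direct because paths with distinct source--tail pairs are linearly independent. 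Choosing a basis of each $D_{a,b}$ and taking the union then yields a basis $B$ of $D$ with the stated property.

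To prove $d_{a,b}\in D$, I would isolate the source and the tail separately using the comultiplication, much in the spirit of Lemma~\ref{le:22} and Proposition~\ref{pr:22}. First fix the source: by Proposition~\ref{pr:22} the vertex $a$ lies in $D$, and in $\Delta(d)$ the term $a\otimes(\sum_{\{i:\,s(p_i)=a\}}\lambda_ip_i)$ appears with $a$ not occurring in the first tensor slot of any other summand; applying a functional $f\in PC(G)^*$ with $f(a)=1$ and $f=0$ on all other paths, the action $f\cdot d$ extracts exactly $\sum_{\{i:\,s(p_i)=a\}}\lambda_ip_i\in D$. Repeating the symmetric argument on the right — using $\Delta$ and a functional detecting the trivial path $b$ in the second slot — extracts from this element the sub-sum over those $i$ with additionally $t(p_i)=b$, namely $d_{a,b}$, and this still lies in $D$ because $D$ is a right and left $PC(G)^*$-submodule (equivalently, a subcoalgebra is stable under such convolution actions). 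One must be slightly careful that the relevant paths may themselves be trivial or have coinciding source and tail, but these are handled by the conventions on trivial paths set up in the preliminaries.

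The step I expect to be the main obstacle is verifying cleanly that the functional trick genuinely lands back in $D$ in both directions — that is, making precise that for a subcoalgebra $D$ the element $f\cdot d$ (and $d\cdot f$) stays in $D$ for arbitrary $f\in PC(G)^*$, and that composing the left-extraction and the right-extraction isolates precisely the $(a,b)$-component without picking up cross terms. The potential pitfall is that after the first extraction the surviving paths all share source $a$, but a single path could a priori reappear in the second-slot position for different values of $b$; this is ruled out because a path has a unique tail, so the second functional selects a well-defined sub-sum. Once these bookkeeping points are nailed down, the decomposition $D=\bigoplus_{a,b}D_{a,b}$ and the construction of $B$ are immediate.
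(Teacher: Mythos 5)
Your proposal is correct and follows essentially the same route as the paper: decompose each $d\in D$ into its source--tail homogeneous components and recover them inside $D$ by hitting $d$ on both sides with the characteristic functionals of the vertices $a$ and $b$ (the paper's $f_w\cdot d\cdot f_v$), using that a subcoalgebra is stable under both $PC(G)^*$-hit actions. The only cosmetic difference is that you package the conclusion as a direct sum decomposition $D=\bigoplus_{a,b}D_{a,b}$ before choosing the basis, which the paper leaves implicit.
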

\begin{proof}
Let $d\in{D}$. Consider the decomposition $d=d_1+\cdots+d_t$, where each $d_i$ is a
linear combination of paths with common source and common tail. Let us prove that
$d_i\in{D}$ for any $i=1,\ldots,t$. Fix an index $i$ and assume that the paths in $d_i$
start at $a$ and end at $b$. For any vertex $v\in{V}$ we may define two sets of indices
as follows:
$$
 H^0_v=\{h\mid\;d_h\mbox{ is a linear combination of paths $p$}\mbox{ starting at }v\},
$$
$$
 H^1_v=\{h\mid\;d_h\mbox{ is a linear combination of paths $p$}\mbox{ starting at }v\}.
$$
Then there exists a decomposition $d=\sum_{v\in V}\sum_{h\in{H^0_v}}d_h$. Hence
$$
 \Delta(d)=\sum_{v\in{V}}\sum_{h\in{H^0_v}}v\otimes{d_h}+\mbox{\textit{other terms}}.
$$
We consider the linear map $f_v:PC(G)\longrightarrow{k}$ defined by $f_v(v)=1$ and
$f_v(p)=0$, for any path $p\neq{v}$. Then $\sum_{h\in{H^0_v}}d_h=d\cdot{f_v}\in{D}$. In
the same way, for any $w\in{V}$, we have $\sum_{h\in{H^1_w}}d_h=f_w\cdot{d}\in{D}$.
That
is, $\sum_{h\in{H^0_v\cap{H^1_w}}}d_h=f_w\cdot{d}\cdot{f_v}\in{D}$. Take $w=b$, $v=a$
and then $d_i=\sum_{h\in{H^0_a\cap{H^1_b}}}d_h=f_b\cdot{d}\cdot{f_v}\in{D}$.
\end{proof}

Let $D$ be a subcoalgebra of a path coalgebra $PC(G)$. If $D$ is the path coalgebra
$PC(G')$ associated to a subquiver $G'$ of $G$, then we  say that $D$ is a
 \emph{path subcoalgebra}
of $PC(G)$. It is interesting to relate subcoalgebras and path subcoalgebras of
$PC(G)$,
since, even when the path subcoalgebra is far away from the given subcoalgebra, we
may see
that it contains some relevant information.

The simplest method to define such subcoalgebra is the following:
\begin{enumerate}[(1)]
\item
We consider $P(D)$, the set of paths defined by
\[
\begin{array}{ll}
 P(D):=\{p\in{PC(G)}\mid\;
 &p\mbox{ is a  path which appears non }\\
 &\mbox{trivially in some element of }D\},
\end{array}
\]
\item
We denote $E(D):=E\cap{P(D)}$, the set of arrows in $G$ that belong to $P(D)$, and we
have $V(D)=V\cap{P(D)}$.
\item
We denote by $G(D)$ the quiver $(V(D),E(D))$, and call it the \emph{quiver associated}
to the subcoalgebra $D$.
\end{enumerate}

In the following Example we show how different could be the coalgebras $D$ and
$PC(G(D))$.

\begin{example}\label{ex:15}
We consider the quiver $G$ given by the picture:
\[
\xy
 (0,0)*+{a}="a",
 (30,0)*+{b}="b",
 (60,0)*+{c}="c"
 \ar@/^4ex/ "a";"b" ^x
 \ar@/^4ex/ "b";"c" ^y
 \ar@/^-4ex/ "a";"b" _z
 \ar@/^-4ex/ "b";"c" _t
\endxy
\]
And let $D$ be the subcoalgebra generated by $\alpha=xy+xt+zy+zt$. A $k$-basis of
$D$ is
\[
\{a,b,c,x+z,y+t,\alpha\},
\]
as $\Delta(\alpha)=a\otimes{\alpha}+(x+z)\otimes(y+t)+\alpha\otimes{c}$. The coalgebra
$D$ satisfies:
\begin{itemize}
\item
$V(D)=\{a,b,c\}$,
\item
$E(D)=\{x,y,z,t\}$, and
\item
$P(D)=\{a,b,c,x,y,z,t,xy,xt,zy,zt\}$.
\end{itemize}
Therefore
 $E(D)\nsubseteq{D}$,
 $D\subsetneqq{k\cdot{P(D)}}={PC(G(D))}$ and
 $\dim(D)=6\neq11=\dim(PC(G(D)))$.
\end{example}

\begin{remark}
Let $D$ be a subcoalgebra of $PC(G)$. By Lemma~\ref{le:22} and Theorem~\ref{th:22}, for
every path $p\in{P(D)}$, there exists a linear combination of paths with common source
and common tail, $\sum_{i=1}^t\lambda_ip_i\in{D}$, such that $p=p_i$ for some index
$i$.
\end{remark}

The following Proposition shows that, for any subcoalgebra $D\subseteq{PC(G)}$, the
subcoalgebra $PC(G(D))$ is the smallest path subcoalgebra of $PC(G)$ containing $D$.

\begin{proposition}
Let $A$ and $B$ be subcoalgebras  of $PC(G)$, then the following
statements hold.
\begin{enumerate}[(1)]
\item $P(A)$ is closed under subpaths.
\item $A\subseteq{k\cdot{P(A)}}\subseteq{PC(G(A))}$ is a tower of subcoalgebras of $C$.
\item $A$ has a basis constituted by paths if and only if $A=k\cdot{P(A)}$.
\item If $A\subseteq{B}$, then $P(A)\subseteq{P(B)}$.
\item $P(A\wedge{B})\subseteq{P(A)}\wedge{P(B)}$ and $k\cdot{P(A\wedge{B})}$ is a
subcoalgebra of $k\cdot{P(A)}\wedge{k\cdot{P(B)}}$.
\item $k\cdot{P(A+B)}=k\cdot{P(A)}+k\cdot{P(B)}$.
\item $P((G(D))$ is the smallest path subcoalgebra of $PC(G)$ containing $D$.
\end{enumerate}
\end{proposition}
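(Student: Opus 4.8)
The plan is to prove the seven items in the order listed, since each one uses the previous ones; the only real tools needed are Lemma~\ref{le:22} and Theorem~\ref{th:22}, together with the elementary fact that distinct paths are linearly independent in $PC(G)$. For (1), let $p\in P(A)$; by the Remark above (a consequence of Theorem~\ref{th:22} and Lemma~\ref{le:22}) we may assume $p$ occurs with nonzero coefficient in an element $d=\sum_{i=1}^{t}\lambda_ip_i\in A$ whose paths $p_i$ share a common source and a common tail, with $p=p_1$. Given $q\preceq p$, write $p=uqw$. Applying Lemma~\ref{le:22} to remove the common left factor $u$ yields an element of $A$ in which $qw$ still occurs with coefficient $\lambda_1\ne 0$ --- there is no cancellation, since distinct paths having $u$ as a prefix have distinct complementary factors --- and then applying the left--right mirror of that lemma (same proof, using $f\cdot d$ in place of $d\cdot f$) to remove the right factor $w$ yields an element of $A$ in which $q$ occurs; hence $q\in P(A)$.

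Items (2)--(4), (6) and (7) are then bookkeeping on top of (1). In (2): $A\subseteq k\cdot P(A)$ is the definition of $P(A)$; since $\Delta(p)=\sum_{p_1p_2=p}p_1\otimes p_2$ with every $p_1,p_2$ a subpath of $p$, part (1) shows $k\cdot P(A)$ is $\Delta$--stable, hence a subcoalgebra, while $PC(G(A))$ is the path coalgebra of the subquiver $G(A)=(V(A),E(A))$ of $G$, hence a subcoalgebra of $PC(G)$; the middle inclusion holds because a path $p=x_1\cdots x_n\in P(A)$ has all its arrows $x_j$ and (vertices being trivial subpaths) all its vertices again in $P(A)$ by (1), so $p$ is a path of $G(A)$. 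For (3): if $A=k\cdot P(A)$ then $P(A)$ is the required path basis; conversely a path basis of $A$ lies in $P(A)$, and any $p\in P(A)$ occurs in a linear combination of that basis, so by linear independence of paths $p$ is itself a basis element, giving $P(A)\subseteq A$. Item (4) is immediate from the definitions. For (6), the coefficient of a path in $a+b$ ($a\in A$, $b\in B$) is the sum of its coefficients in $a$ and in $b$, so $P(A+B)\subseteq P(A)\cup P(B)$, which together with (4) gives $k\cdot P(A+B)=k\cdot P(A)+k\cdot P(B)$. For (7): by (2), $PC(G(A))$ is a path subcoalgebra containing $A$; if $PC(G')$, with $G'$ a subquiver of $G$, is any path subcoalgebra containing $A$, then every path of $P(A)$ occurs in an element of $A\subseteq PC(G')$ and hence, paths being linearly independent, is a path of $G'$, so $G(A)$ is a subquiver of $G'$ and $PC(G(A))\subseteq PC(G')$; thus $PC(G(A))$ is the smallest such (reading $PC(G(D))$ in the last clause of the statement).

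The crux, and the step I expect to be the main obstacle, is (5): one has to pass to $C^{*}$ in order to ``separate'' the two tensor factors of $\Delta$, since arguing directly with $\Delta(d)$ is frustrated by $d$ being a sum of several paths. The plan is to use $A\wedge^C B=\Delta^{-1}(A\otimes C+C\otimes B)$ together with the equivalent condition, coming from the annihilator description $A\wedge^C B=(A^{\botast}B^{\botast})^{\botc}$, that $d\in A\wedge^C B$ if and only if $(fg)(d)=0$ whenever $f$ vanishes on $A$ and $g$ vanishes on $B$. Let $p$ occur in some $d\in A\wedge^C B$ and fix a factorization $p=p_1p_2$. If $p_1\notin P(A)$ and $p_2\notin P(B)$, then the dual functionals $\delta_{p_1},\delta_{p_2}$ (value $1$ on that path, $0$ on every other path) vanish on $k\cdot P(A)\supseteq A$ and on $k\cdot P(B)\supseteq B$ respectively, while $(\delta_{p_1}\delta_{p_2})(d)$, computed by expanding $\Delta$ in the path basis, equals the coefficient of $p$ in $d$, which is nonzero --- a contradiction. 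Hence every factorization $p=p_1p_2$ has $p_1\in P(A)$ or $p_2\in P(B)$, i.e.\ $\Delta(p)\in k\cdot P(A)\otimes C+C\otimes k\cdot P(B)$, so $p\in k\cdot P(A)\wedge^C k\cdot P(B)$; ranging over $P(A\wedge^C B)$ gives $k\cdot P(A\wedge^C B)\subseteq k\cdot P(A)\wedge^C k\cdot P(B)$, which by (2) realizes the left-hand side as a subcoalgebra of the right-hand one, and since $P(k\cdot P(X))=P(X)$ for any subspace $X$, the set inclusion $P(A\wedge^C B)\subseteq P\big(k\cdot P(A)\wedge^C k\cdot P(B)\big)$ follows as well.
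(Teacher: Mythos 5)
Your proof is correct, and it fills in exactly the details the paper leaves out (the paper states only ``The proof is straightforward''), using the paper's own toolkit: Lemma~\ref{le:22} and its left--right mirror plus linear independence of paths for (1)--(4), (6), (7), and the description $A\wedge^C B=\Delta^{-1}(A\otimes C+C\otimes B)=(A^{\botast}B^{\botast})^{\botc}$ for (5). The only delicate step, your claim in (5) that $(\delta_{p_1}\delta_{p_2})(d)$ equals the coefficient of $p=p_1p_2$ in $d$, is right because a path admits at most one factorization with prescribed first and second factor; note that the same point can be made without dualizing, since $k\cdot P(A)\otimes PC(G)+PC(G)\otimes k\cdot P(B)$ is spanned by the basis tensors $q_1\otimes q_2$ with $q_1\in P(A)$ or $q_2\in P(B)$.
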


The proof is straightforward.

We also need to study the behavior of the paths in $PC(G)$ with respect to
the wedge product of subcoalgebras. We state the following, possibly known, results.

\begin{proposition}
Let $A$ and $B$ be subcoalgebras  of $PC(G)$,  then the following statements hold:
\begin{enumerate}[(1)]
\item
$V(A\wedge{B})=V(A)\cup{V(B)}$.
\item
For any edge $x$ such that $s(x)\in{A}$ and $t(x)\in{B}$, we have $x\in{A\wedge{B}}$.
\item
For any path $p_1\in{A}$ and any path $p_2\in{B}$ such that $t(p_1)=s(p_2)$, we have
that $p_1p_2\in{A\wedge{B}}$.
\item
For any path $p_1\in{A}$, $p_2\in{B}$ and any edge $x\in{E}$ such that $t(p_1)=s(x)$
and
$t(x)=s(p_2)$ we have $p_1xp_2\in{A\wedge{B}}$.
\item
$E(A\wedge{B})=E(A)\cup{E(B)}\cup\{x\in{E}\mid\;s(x)\in{V(A)}\mbox{ and
}t(x)\in{V(B)}\}$.
\end{enumerate}
\end{proposition}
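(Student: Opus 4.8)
The plan is to work throughout from the formula $A\wedge^C B=\Delta^{-1}(C\otimes B+A\otimes C)$, with $C=PC(G)$, so that verifying $d\in A\wedge^C B$ is the same as verifying $\Delta(d)\in C\otimes B+A\otimes C$. I would treat (2), (3), (4) together, since (2) and (3) are special cases of (4): for (2) take $p_1$ and $p_2$ to be the trivial paths $s(x)$ and $t(x)$, and for (3) either split the last arrow off $p_1$ and apply (4), or observe that $p_1p_2=p_2\in B\subseteq A\wedge B$ when $p_1$ is trivial. For (4) itself, write $w=p_1xp_2$ as a string of arrows $y_1\cdots y_n$ with $p_1=y_1\cdots y_m$ and $y_{m+1}=x$, so $\Delta(w)=\sum_{j=0}^{n}(y_1\cdots y_j)\otimes(y_{j+1}\cdots y_n)$; for $j\le m$ the first factor is a subpath of $p_1$, hence lies in $A$ by Proposition~\ref{pr:030411}, and for $j\ge m+1$ the second factor is a subpath of $p_2$, hence lies in $B$. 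Every index $j$ falls into one of the two ranges, so $\Delta(w)\in C\otimes B+A\otimes C$, i.e.\ $w\in A\wedge B$.

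For (1) the inclusion $V(A)\cup V(B)\subseteq V(A\wedge B)$ is immediate from $A+B\subseteq A\wedge B$. For the converse I would invoke Proposition~\ref{pr:22}: if $a\in V(A\wedge B)$ then the vertex $a$ itself lies in $A\wedge B$, so $ka\subseteq A\wedge^C B$; since $ka$ is simple, hence prime, Lemma~\ref{le:030527a} gives $ka\subseteq A$ or $ka\subseteq B$, that is, $a\in A$ or $a\in B$, and in either case $a\in V(A)\cup V(B)$, because a vertex lying in a subcoalgebra $D$ automatically belongs to $V(D)$.

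For (5), the inclusion $\supseteq$ is routine: $A,B\subseteq A\wedge B$ forces $P(A),P(B)\subseteq P(A\wedge B)$, hence $E(A),E(B)\subseteq E(A\wedge B)$; and if $x\in E$ with $s(x)\in V(A)$ and $t(x)\in V(B)$, then $s(x)\in A$ and $t(x)\in B$ by Proposition~\ref{pr:22}, so $x\in A\wedge B$ by part (2) and thus $x\in E(A\wedge B)$. The real content is the inclusion $\subseteq$. Take $x\in E(A\wedge B)$ and pick $d=\sum_{i=1}^{t}\lambda_ip_i\in A\wedge B$, with distinct paths $p_i$, all $\lambda_i\ne 0$, and $x=p_{i_0}$ for some index $i_0$. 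Assume $x\notin E(A)$ and $x\notin E(B)$; the goal is $s(x)\in V(A)$ and $t(x)\in V(B)$. If this fails, say $s(x)\notin V(A)$ (the case $t(x)\notin V(B)$ being symmetric under interchanging $A$ and $B$, prefixes and suffixes, and sources and tails), then the vertex $s(x)$ occurs in no element of $A$, so the coordinate functional $\xi\in C^{*}$ with $\xi(s(x))=1$ and $\xi=0$ on every other path vanishes on $A$. Then for each $\eta\in C^{*}$ vanishing on $B$ the functional $\xi\otimes\eta$ vanishes on $C\otimes B+A\otimes C$, so $(\xi\otimes\eta)(\Delta(d))=0$; writing $\Delta(d)=\sum_i\lambda_i\sum_{q_1q_2=p_i}q_1\otimes q_2$, the only surviving summands are those with $q_1=s(x)$, which forces $q_1$ to be the trivial prefix $s(p_i)$ with $s(p_i)=s(x)$ and $q_2=p_i$. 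Hence $\eta$ annihilates $d_0:=\sum_{i\,:\,s(p_i)=s(x)}\lambda_ip_i$ for every $\eta$ vanishing on $B$, so $d_0\in B$ (a subspace is the intersection of the kernels of the functionals that vanish on it); but $x=p_{i_0}$ occurs in $d_0$, whence $x\in E(B)$, a contradiction. This gives $\subseteq$, and with it (5).

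I expect the reverse inclusion in (5) to be the only delicate step: all the other parts are direct manipulations of deconcatenation coproducts together with Propositions~\ref{pr:030411} and~\ref{pr:22}. The difficulty there is to descend from the statement ``$\Delta(d)$ lies in $C\otimes B+A\otimes C$'' to a statement about the single arrow $x$; the device that makes this work is to test $\Delta(d)$ against $\xi\otimes\eta$ with $\xi$ dual to the source vertex $s(x)$, which isolates exactly the source-homogeneous part of $d$ and exhibits it inside $B$ (or $A$ in the symmetric case).
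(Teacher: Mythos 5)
Your proof is correct. Note that the paper itself offers no argument here --- it simply declares the proof straightforward --- so there is nothing to compare against; what you have written is a complete verification of exactly the kind the authors presumably had in mind. Parts (2)--(4) via the deconcatenation coproduct together with closure under subpaths (Proposition~\ref{pr:030411}) are airtight, as are the easy inclusions in (1) and (5). The genuine content you supply is the inclusion $E(A\wedge B)\subseteq E(A)\cup E(B)\cup\{x\mid s(x)\in V(A),\,t(x)\in V(B)\}$, and your device there is sound: since $s(x)\notin V(A)$ forces the trivial path $s(x)$ to lie outside $P(A)$ (recall $V(A)=V\cap P(A)$), the coordinate functional $\xi$ dual to that vertex kills $A$, and testing $\Delta(d)\in A\otimes C+C\otimes B$ against $\xi\otimes\eta$ for all $\eta$ vanishing on $B$ isolates the source-homogeneous part $d_0$ of $d$ and places it in $B$, contradicting $x\notin E(B)$; the tail-side case is genuinely symmetric. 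One small stylistic remark: in (1) you invoke primeness of the simple subcoalgebra $ka$ via Lemma~\ref{le:030527a}, which works, but the same functional trick you use in (5) gives it directly --- if $a\notin A$ and $a\notin B$, choose $\xi$ vanishing on $A$ and $\eta$ vanishing on $B$ with $\xi(a)\neq0\neq\eta(a)$, and $(\xi\otimes\eta)(\Delta(a))=\xi(a)\eta(a)\neq0$ contradicts $a\in A\wedge B$ --- so the whole proposition can be handled by one uniform dual-pairing argument without appealing to the prime machinery of Section 1.
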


The proof is straightforward.

As a consequence of these two Propositions we obtain the following characterization of
coidempotent subcoalgebras of path coalgebras. We recall that a subcoalgebra $D$ of a
coalgebra $C$ is said to be a \emph{coidempotent subcoalgebra} if $D\wedge^C{D}=D$.

\begin{theorem}\label{th:030331}
Let $D$ be a subcoalgebra of $PC(G)$, then the following statements are equivalent:
\begin{enumerate}[(a)]
\item
$D$ is coidempotent.
\item
$D$ is the path coalgebra of the subquiver $G(D)$ and
$E(D)=\{x\in{E}\mid\;s(x),t(x)\in{V(D)}\}$.
\end{enumerate}
\end{theorem}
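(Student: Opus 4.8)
The plan is to prove the two implications separately, relying on the two preceding Propositions describing $V$, $E$ and the wedge product.

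\medskip

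\textbf{(b) $\Rightarrow$ (a).} Suppose $D=PC(G(D))$ with $E(D)=\{x\in E\mid s(x),t(x)\in V(D)\}$. I would compute $D\wedge^{PC(G)}D$ directly using the second Proposition. Since $D$ is a path subcoalgebra, $V(D\wedge D)=V(D)\cup V(D)=V(D)$, and by part (5) of that Proposition, $E(D\wedge D)=E(D)\cup E(D)\cup\{x\in E\mid s(x)\in V(D),\,t(x)\in V(D)\}$, which by hypothesis equals $E(D)$. Thus $G(D\wedge D)=G(D)$. It remains to check that $D\wedge D$ has a basis of paths, so that by part (3) of the first Proposition $D\wedge D=k\cdot P(D\wedge D)=PC(G(D))=D$; this follows because $D\wedge D$ is sandwiched between $D=PC(G(D))$ and $PC(G(D\wedge D))=PC(G(D))$, forcing equality. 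Hence $D$ is coidempotent.

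\medskip

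\textbf{(a) $\Rightarrow$ (b).} Suppose $D\wedge^{PC(G)}D=D$. First I would show $D$ has a basis of paths. Take the basis $B$ from Theorem \ref{th:22}: each element is a linear combination $d=\sum_i\lambda_i p_i$ of paths sharing source $a$ and tail $b$. If some such $d$ is not itself a path, pick a path $p_i$ of maximal length occurring in it; by Lemma \ref{le:22} one peels off proper subpaths, and combined with Proposition \ref{pr:22} all the relevant vertices lie in $D$, so $a,b\in V(D)$. Now every $p_i$ can be written as a concatenation $p_i'x p_i''$ (or shorter), and one checks that such a path lies in $D\wedge D$ whenever its initial and terminal vertices lie in $V(D)$; using $D\wedge D=D$ this should force each individual $p_i\in D$, hence (applying this to a spanning set) $D=k\cdot P(D)$, i.e.\ $D=PC(G(D))$ by part (3) of the first Proposition. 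Finally, for the edge condition: the inclusion $E(D)\subseteq\{x\mid s(x),t(x)\in V(D)\}$ is automatic; for the reverse, given $x\in E$ with $s(x),t(x)\in V(D)$, Proposition \ref{pr:22} gives $s(x),t(x)\in D$, and part (2) of the second Proposition yields $x\in D\wedge D=D$, whence $x\in E(D)$.

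\medskip

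\textbf{Main obstacle.} The delicate point is the argument in (a) $\Rightarrow$ (b) that a general basis element $d=\sum_i\lambda_i p_i\in D$ forces each path $p_i\in D$. The coidempotency hypothesis is global (it is an equality of subcoalgebras, not a statement about individual paths), so one must carefully exhibit each $p_i$ inside $D\wedge D$ — decomposing $p_i=q_1 q_2$ with $q_1$ ending and $q_2$ starting at vertices of $V(D)$, and invoking part (3) of the second Proposition together with the first Proposition's closure of $V$ — and only then use $D\wedge D=D$. Once path-by-path membership is established the rest is bookkeeping with the two Propositions.
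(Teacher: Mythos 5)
Your implication (b) $\Rightarrow$ (a) is correct and comparable to the paper's: the paper simply observes $V(D\wedge D)=V(D)$, so every path occurring in an element of $D\wedge D$ has all its vertices in $V(D)$ and hence, under (b), is a path of $G(D)$ and lies in $D=PC(G(D))$; your sandwich $D\subseteq D\wedge D\subseteq PC(G(D\wedge D))=PC(G(D))=D$ via the formula for $E(A\wedge B)$ achieves the same. The problems are in (a) $\Rightarrow$ (b). The pivotal claim you lean on --- that a path lies in $D\wedge D$ as soon as its source and tail lie in $V(D)$ --- is false for paths of length at least two: in Example~\ref{ex:15} the subcoalgebra $D$ has basis $\{a,b,c,x+z,y+t,\alpha\}$, the path $xy$ has source $a$ and tail $c$ in $V(D)$, yet $\Delta(xy)=a\otimes xy+x\otimes y+xy\otimes c$ and $x\otimes y\notin D\otimes PC(G)+PC(G)\otimes D$ (neither $x$ nor $y$ lies in $D$), so $xy\notin D\wedge D$. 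Only vertices and single arrows enjoy this property, since $\Delta(x)=s(x)\otimes x+x\otimes t(x)$. Your proposed repair --- writing $p_i=q_1q_2$ and invoking the statement ``$p_1\in A$, $p_2\in B$, $t(p_1)=s(p_2)$ implies $p_1p_2\in A\wedge B$'' --- is circular as written: that statement needs $q_1,q_2\in D$, which is precisely the path-by-path membership you are trying to prove. What makes the direction work is an induction on length that uses coidempotency at every stage (this is the paper's route): vertices of $V(D)$ lie in $D$ by Proposition~\ref{pr:22}; any arrow with both endpoints in $V(D)$ lies in $D\wedge D=D$; and then any path built from such arrows lies in $D^{\wedge t}=D$ (equivalently, split off one arrow at a time and use $D\wedge D=D$ repeatedly).

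There is a second gap at the end of your (a) $\Rightarrow$ (b): even if you establish that every path of $P(D)$ lies in $D$, part (3) of the Proposition on $P(-)$ only yields $D=k\cdot P(D)$; it does not yield $D=PC(G(D))$. In general $k\cdot P(D)\subsetneqq PC(G(D))$ (the paper's later example of the subcoalgebra generated by the powers $(xyz)^n$ shows the inclusion can be strict), and statement (b) demands more: every path of the quiver $G(D)$ --- in particular every concatenation of arrows joining vertices of $V(D)$, whether or not it appears in some element of $D$ beforehand --- must belong to $D$, and every arrow of $G$ between vertices of $V(D)$ must lie in $E(D)$. Both facts fall out of the same length induction with $D^{\wedge t}=D$ described above; once that is in place, your treatment of the edge condition (via $\Delta(x)=s(x)\otimes x+x\otimes t(x)$ and Proposition~\ref{pr:22}) is fine.
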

\begin{proof}
(a) $\Rightarrow$ (b). Let $D\subseteq{PC(G)}$ be a coidempotent subcoalgebra, then
$V(D)\subseteq{D}$ and, for any arrow $x$ such that $s(x)$, $t(x)\in{V(D)}$, we have
$\Delta(x)=s(x)\otimes{x}+x\otimes t(x)$. Hence $x\in{D\wedge{D}}=D$, i.e.,
$x\in{E(D)}$. Otherwise, if $p$ is a path of length $t$ in the quiver $(V(D),E(D))$,
then $p\in{D^{\wedge{t}}}=D$. Hence $PC(V(D),E(D))\subseteq{D}$ and they are equal.
\newline
(b) $\Rightarrow$ (a). We have $V(D)=V(D\wedge D)$, therefore given an arbitrary
element
$x=\sum_{i=1}^t\lambda_ip_i$ in $D\wedge D$ we have, for every $i=1, \dots ,t$, $V(p_i)
\subseteq D$ and then $p_i\in D$ for each $i$. Thus $x\in D$.
\end{proof}

\begin{remark}
Combining \cite[Proposition 3.8]{Woodcock:1997} and \cite[Theorem
4.5]{Nastasescu/Torrecillas:1996} we obtain a bijective correspondence between
coidempotent subcoalgebras of an arbitrary coalgebra $C$ and subsets of a fixed set of
representatives of simple right $C$--comodules. Thus Theorem~\ref{th:030331} shows
explicitly this correspondence in the case of path coalgebras.
\end{remark}

\section{Prime subcoalgebras of a path coalgebra}

In this section we apply the results of Section~\ref{se:2} in order to characterize
prime subcoalgebras of a path coalgebra. We start with a Theorem which
provides information about prime subcoalgebras of $PC(G)$ and their
elements.

\begin{theorem}\label{th:31}
Let $D$ be a prime subcoalgebra of $PC(G)$. For any path $q\in{P(D)}$ and any positive
integer $n$, there exists a cycle $c\in{P(D)}$ such that $q\prec^nc$.
\end{theorem}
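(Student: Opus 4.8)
The plan is to show that a prime subcoalgebra $D$ must be "cyclic" in a very strong sense: every path it contains can be surrounded, arbitrarily many times, by a single cycle. The natural strategy is contrapositive combined with a wedge decomposition. Fix a path $q \in P(D)$ and an integer $n \geq 1$, and suppose for contradiction that no cycle $c \in P(D)$ satisfies $q \prec^n c$. I want to build two subcoalgebras $A, B$ of $PC(G)$ with $D \subseteq A \wedge B$ but $D \not\subseteq A$ and $D \not\subseteq B$, contradicting Lemma~\ref{le:030527a}. The idea for cutting $D$ into a wedge is to separate paths according to \emph{how many times they pass through $q$}: roughly, let $B$ be spanned by all paths in $P(D)$ that contain $q$ at most $n-1$ times (together with all vertices of $PC(G)$, and closed under subpaths), and let $A$ be defined symmetrically or, more precisely, be spanned by those paths through which $q$ passes "late enough" — I will need to think of $A$ as built from the paths whose \emph{initial} segment up to the $n$-th occurrence of $q$ is already forced to be short. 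The guiding principle is that $\Delta(p)$ for a path $p$ that passes through $q$ exactly $m \geq n$ times has a summand $p_1 \otimes p_2$ where $p_1$ passes through $q$ exactly $n-1$ times and $p_2$ passes through $q$ the remaining times; this is exactly the bookkeeping that makes $p$ lie in (something like) $B \wedge A$.

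Concretely, I would proceed in the following steps. First, record the elementary fact about $\Delta$ on path coalgebras: if $p = p_1 q p_2 q \cdots q p_{n+1}$ witnesses $q \prec^n p$, then among the summands $u \otimes v$ of $\Delta(p)$ there is one with $u = p_1 q p_2 \cdots q p_n q$ (ending exactly at the $n$-th copy of $q$) and $v = p_{n+1}$; more usefully, one can always split a path that contains $q$ at least $n$ times as $u \otimes v$ with $u$ containing $q$ exactly $n$ times "minimally" and $v$ containing no further obstruction. Second, use Theorem~\ref{th:22} and the Remark after Example~\ref{ex:15}: every path $p \in P(D)$ occurs in some homogeneous element $d_p = \sum \lambda_i p_i \in D$ all of whose $p_i$ share the source and tail of $p$. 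This lets me transfer statements about individual paths to statements about elements of $D$. Third, define $B$ to be $k\cdot\{p \in \text{paths of } PC(G) \mid q \not\prec^{\,n} p\}$ — equivalently the paths through which $q$ passes at most $n-1$ times — and check this is a subcoalgebra: it is closed under subpaths (a subpath passes through $q$ no more times than $p$ does) and under $\Delta$ for the same reason. Define $A$ analogously using a complementary condition designed so that $A \wedge B$ recaptures all paths, namely $A = k\cdot\{p \mid$ every occurrence of $q$ in $p$ lies within the first $\ell$ arrows$\}$ for a suitable bound; the precise cutoff is what needs care, and the cleanest choice is probably to let $A$ be generated by the "heads up to and including the $n$-th $q$" of paths in $P(D)$ and everything shorter, again closed under subpaths.

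Fourth — and this is where the primeness and the no-cycle hypothesis are actually used — I must verify $D \subseteq A \wedge B$, i.e. for every $d \in D$, $\Delta(d) \in PC(G) \otimes B + A \otimes PC(G)$. Writing $d = \sum \lambda_i p_i$ and splitting each $\Delta(p_i) = \sum u \otimes v$, I need each pair $u \otimes v$ to have $v \in B$ or $u \in A$; the hypothesis that $P(D)$ contains \emph{no} cycle $c$ with $q \prec^n c$ is what prevents a "wrap-around" path that would simultaneously fail both conditions, because such a bad path would have to revisit its source after passing through $q$ enough times, i.e. would contain a cycle through $q$ at least $n$ times. Fifth, check the two strict containments: $D \not\subseteq B$ because if \emph{some} path in $P(D)$ passes through $q$ at least $n$ times — which I can arrange, or else the statement is handled by iterating a smaller case — that path lies in $D$ (via $d_p$ and Lemma~\ref{le:22}) but not in $B$; similarly $D \not\subseteq A$. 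This contradicts Lemma~\ref{le:030527a}(b), proving the theorem.

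The main obstacle will be choosing $A$ and $B$ so that $A \wedge B$ genuinely contains $D$ while both are proper: the two "cut by occurrence count" conditions must be complementary in exactly the sense required by the definition $A \wedge B = \Delta^{-1}(PC(G)\otimes B + A \otimes PC(G))$, and getting the off-by-one in the exponent $n$ right — and ensuring that the relevant elements of $D$ are honest linear combinations of paths with common source and tail so that Lemma~\ref{le:22} applies uniformly — is the delicate bookkeeping. I also expect to need the base case where no path in $P(D)$ passes through $q$ even once, or $q$ is a vertex, which should be immediate or vacuous, and possibly an induction on $n$ reducing the general case to $n=1$.
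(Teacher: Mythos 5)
There is a genuine gap, and it sits exactly where you flag it yourself. Your contradiction scheme needs \emph{two} failures of containment: $D\not\subseteq A$ and $D\not\subseteq B$, where $B$ is spanned by the paths through which $q$ passes at most $n-1$ times. But $D\not\subseteq B$ means precisely that $P(D)$ contains a path passing through $q$ at least $n$ times, and that is (up to extracting the cycle, which is the easy last step) the conclusion of the theorem. Under your contradiction hypothesis there is no way to ``arrange'' it, and the ``iterate a smaller case'' escape does not close the circle: the induction hypothesis for $n-1$ only yields a cycle with $n-1$ occurrences of $q$, which does not put any element of $D$ outside your $B$, and you cannot concatenate that cycle with itself because the square of a cycle in $P(D)$ need not lie in $P(D)$. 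In addition, the second coalgebra $A$ (``occurrences of $q$ confined to an initial segment'') is never pinned down, and the verification $D\subseteq A\wedge B$ is exactly the step you leave open; as described, a single wedge of an occurrence-bounded coalgebra against such an $A$ does not obviously contain all of $D$.

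The structural point you are missing is that only \emph{one} proper subcoalgebra is ever needed, but primeness must be invoked $n+1$ times rather than once. The paper takes $A=k\cdot\{p\mid q\npreceq p\}$ (so $D\not\subseteq A$ is immediate from $q\in P(D)$), proves by induction that any path outside $\wedge^{m}A$ satisfies $q\prec^{m}p$, and then uses primeness iteratively on $\wedge^{n+1}A=A\wedge(\wedge^{n}A)$ to conclude $D\nsubseteq\wedge^{n+1}A$; a path of $P(D)$ outside $\wedge^{n+1}A$ passes through $q$ at least $n+1$ times, and the segment between the first and last occurrences of $q$ is the desired cycle (this cycle-extraction observation you do have). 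Equivalently, one can run your contrapositive correctly by a descending induction: if no cycle of $P(D)$ satisfies $q\prec^{n}c$, then every path of $P(D)$ has at most $n$ occurrences of $q$, hence $D\subseteq A_{n-1}\wedge A_{0}$ where $A_{j}$ is spanned by paths with at most $j$ occurrences of $q$ and $A_0=A$; since $D\not\subseteq A_{0}$, primeness forces $D\subseteq A_{n-1}$, then $D\subseteq A_{n-2}$, and so on down to $D\subseteq A_{0}$, a contradiction. Either way the second wedge factor is always the $q$-avoiding coalgebra and primeness is applied repeatedly; your single-shot wedge with $D\not\subseteq B$ as an input cannot be completed.
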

\begin{proof}
First we prove that there exists a cycle passing through $q$. We consider the vector
space $A$ with basis
\[
 \{p\mid\;p\mbox{ is a path in }G\mbox{ such that }q\npreceq{p}\}.
\]
It is clear that $A$ is a subcoalgebra of $PC(G)$, and  $D\not\subseteq{A}$.
\newline
We claim that, \emph{if a path $p$ satisfies $p \notin \wedge ^n A$, then $q\prec^n
p$.}
Indeed, let us consider $n=2$; if $p \notin{A}\wedge{A}$, then $p\notin{A}$ so
$q\prec{p}$. If $q\not\prec^2 q$, then $p=r_1 q r_2$ for some paths $r_1$, $r_2\in{A}$.
Hence $\Delta (p) \in A \otimes C + C \otimes A$.
Inductively, if $p \notin \wedge^{n+1} A$, then $p \notin \wedge^{n}A$ so, by the
induction hypothesis, $q\prec^n p$. If $q\not\prec^{n+1} q$, then $p$ can be written as
$p=r_1qr_2\cdots{r_nqr_{n+1}}$ for some paths  $r_1$, \ldots, $r_{n+2}\in{A}$. Hence
$\Delta (p) \in A \otimes C + C \otimes \wedge^{n+1}A$.
\newline
Since $D$ is prime such that $D \not\subseteq A$, we obtain that
$D\nsubseteq\wedge^{n+1}A$. So, there exist some $\alpha\in{A}$ such that
$\alpha\notin\wedge^{n+1}A$. In particular, we obtain that there exists a path $p$ in
$P(D)$ such that $p\notin\wedge^{n+1}A$ (one of the paths appearing in the
expression of
$\alpha$), therefore $q\prec^{n+1} p$. If $p=r_1 q r_2 \dots r_n q r_{n+1} q r_{n+2}$
for some paths $r_1$, \ldots, $r_{n+2}\in{A}$, there exists a subpath $c$ of $p$, which
is a cycle, such that $c \prec^n p$, namely
 $c=qr_2\cdots{r_nqr_{n+1}}$.
\end{proof}

Let us prove the following consequence:

\begin{corollary}
Let $D$ be a prime subcoalgebra of $PC(G)$. Let $\sum_{i=1}^s\lambda_ip_i\in{D}$, where
$p_1$, \ldots, $p_s$ are pairwise different paths, then, for any positive integer
$n$, there exists a path $q$ in $P(D)$ such that $p_i \prec^n q$ for all
$i=1,\ldots,s$.
\end{corollary}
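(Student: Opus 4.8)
The plan is to reduce the Corollary to Theorem~\ref{th:31} by first producing a single path in $P(D)$ that ``dominates'' all of $p_1,\dots,p_s$ as a subpath, and then invoking the Theorem to obtain a cycle passing through that path $n$ times. So the first step is to find a path $q_0\in P(D)$ with $p_i\preceq q_0$ for every $i=1,\dots,s$; if we can do this, then applying Theorem~\ref{th:31} to $q_0$ and the integer $n$ yields a cycle $c\in P(D)$ with $q_0\prec^n c$, whence $p_i\preceq q_0$ combined with the $n$ disjoint occurrences of $q_0$ inside $c$ gives $p_i\prec^n c$ for each $i$ (the $n$ copies of $q_0$ sit in pairwise disjoint positions, so each carries a copy of $p_i$), and we take $q=c$.

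To build such a $q_0$ I would argue by induction on $s$, the key case being the amalgamation of two paths. Suppose $p$ and $p'$ are paths in $P(D)$. By Theorem~\ref{th:31} applied to $p$ and a suitably large exponent (say exponent $2$ suffices, but any exponent $\ge 1$ works for the existence of a cycle), there is a cycle $c_p\in P(D)$ with $p\preceq c_p$; since $c_p$ is a cycle its source equals its tail, and likewise there is a cycle $c_{p'}\in P(D)$ through $p'$ based at $t(c_p)$ — but here one must be careful: $c_p$ and $c_{p'}$ need not share a vertex. This is where I expect the main obstacle to lie: concatenating or merging the two cycles requires that the quiver $G(D)$ be ``connected enough'' to route from one cycle to the other and back. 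The way around this is to work inside a strongly connected piece: Theorem~\ref{th:31} shows that every $p_i\in P(D)$ lies on a cycle in $P(D)$, so all the vertices of all the $p_i$ lie in a single strongly connected component of $G(D)$ provided these cycles can be chained together. Concretely, choose for each $i$ a cycle $c_i\in P(D)$ with $p_i\preceq c_i$; then pick, for consecutive indices, paths in $P(D)$ linking $t(c_i)=s(c_i)$ to $s(c_{i+1})$ and back — these linking paths exist in $P(D)$ because one can again wrap a large cycle around a chosen path joining the two base vertices, using that $D$ is prime and hence indecomposable, so $G(D)$ is connected, and then Theorem~\ref{th:31} upgrades connectedness to the existence of return loops. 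Once all the $c_i$ are strung together into one long closed walk that is an honest path (a cycle) $q_0\in P(D)$, we have $p_i\preceq q_0$ for all $i$.

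Having obtained $q_0\in P(D)$ with $p_i\preceq q_0$ for all $i=1,\dots,s$, the finish is immediate: apply Theorem~\ref{th:31} to the path $q_0$ and the given integer $n$ to get a cycle $q:=c\in P(D)$ with $q_0\prec^n q$, say $q=r_1 q_0 r_2 q_0 \cdots r_n q_0 r_{n+1}$ with the indicated occurrences pairwise disjoint; then for each fixed $i$, substituting $p_i\preceq q_0$ into each of the $n$ disjoint copies of $q_0$ exhibits $n$ pairwise disjoint occurrences of $p_i$ in $q$, i.e. $p_i\prec^n q$. The routine points I am glossing — that a finite composite of cycles through a common chain of vertices is again a path in $P(D)$, and that $P(D)$ is closed under the subpath relation — are covered by Proposition~\ref{pr:030411} and the closure results of Section~\ref{se:2}, so the only genuinely substantial input is Theorem~\ref{th:31} itself, used once to co-locate the $p_i$ on a single cycle and once more to multiply that cycle's occurrences.
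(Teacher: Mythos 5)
Your closing step is fine and agrees with the paper: once you have a single path $q_0\in P(D)$ with $p_i\preceq q_0$ for all $i$, Theorem~\ref{th:31} applied to $q_0$ gives a cycle $q$ with $q_0\prec^n q$, hence $p_i\prec^n q$. The gap is in how you produce $q_0$. You build it by stringing together, for each $i$, a cycle $c_i\in P(D)$ through $p_i$ and some ``linking paths'', and you assert that the resulting long path again lies in $P(D)$, citing Proposition~\ref{pr:030411} and the closure results of Section~\ref{se:2}. Those results give closure of $P(D)$ under \emph{subpaths}, not under \emph{concatenation}, and concatenation-closure genuinely fails for prime subcoalgebras: in the paper's own example (quiver with $x\colon a\to b$, loop $y$ at $b$, $z\colon b\to a$, and $D$ generated by $\{(xyz)^n\}$, which is prime), the paths $xy$ and $yz$ lie in $P(D)$ and are composable, but $xyyz$ is not a subpath of any $(xyz)^n$, so it is not in $P(D)$. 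This is precisely why Theorem~\ref{le:14a} has to \emph{assume} that $P(D)$ is closed under concatenation (or that $D$ has a path basis). For the same reason your linking paths are not justified: primeness gives indecomposability and hence at best connectedness of $G(D)$ (walks, not directed paths), and even a directed path in the quiver $G(D)$ need not belong to $P(D)$ (cf.\ Example~\ref{ex:15}, where $P(D)$ is strictly smaller than the set of paths of $G(D)$). So the whole chaining construction does not go through as stated.

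The paper avoids this by using primeness directly to co-locate the $p_i$ on one path: take $A$ spanned by the paths not containing $p_1$ and $B$ spanned by the paths not containing $p_2$; these are subcoalgebras, $D\nsubseteq A$ and $D\nsubseteq B$, so by primeness $D\nsubseteq A\wedge B\supseteq A+B$, and any $\alpha\in D\setminus(A+B)$ must involve a path $p\in P(D)$ lying in neither $A$ nor $B$, i.e.\ with $p_1\preceq p$ and $p_2\preceq p$; then Theorem~\ref{th:31} is applied to this $p$. If you want to salvage your write-up, replace the cycle-chaining paragraph by this wedge argument (or by an appeal to the analogous avoidance-subcoalgebra trick for $s$ paths); as it stands, the step ``the concatenated closed walk is an honest path in $P(D)$'' is false in general and is the substantial content you have not supplied.
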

\begin{proof}
For simplicity we may assume $s=2$, being analogous the proof in the general case. We
consider the vector spaces $A$, with basis
$$
 \{q\mid\;q\mbox{ is a path in }G\mbox{ such that }p_1\npreceq{q}\},
$$
and $B$, with basis
$$
 \{q\mid\; q\mbox{ is a path in }G\mbox{ such that }p_2\npreceq{q} \}.
$$
Then $A$ and $B$ are subcoalgebras of $PC(G)$, $D\nsubseteq A$ and $D\nsubseteq{B}$.
Hence $D\nsubseteq{A\wedge{B}}$, in particular $D\nsubseteq A+B$. Let $\alpha\in{D}$
such that $\alpha\notin{A+B}$. Then there exists a path $p$ in $P(D)$, that appears in
$\alpha$, such that $p\notin{A\cup{B}}$. That is, $p_1\prec{p}$ and $p_2\prec{p}$.
By applying the previous Theorem to the path $p$, the result follows.
\end{proof}

In order to characterize prime subcoalgebras of a path coalgebra, the simplest case
appears when we consider a path subcoalgebra. Let us start the study of this case with
the following Example.

\begin{example}
Let $G$ be a quiver with three arrows $x_1$, $x_2$ and $x_3$ such that
 $t(x_1)=s(x_2)=:a_1$,
 $t(x_2)=s(x_3)=:a_2$ and
 $t(x_3)=s(x_1)=:a_3$,
i.e., they form a cycle of length three.
\[
\xy
 (0,10)*+{a_3}="a",
 (17,0)*+{a_2}="b",
 (17,20)*+{a_1}="c"
 \ar@/^3.1ex/ "a";"c" ^{x_{1}}
 \ar@/^3.1ex/ "c";"b" ^{x_{2}}
 \ar@/^3.1ex/ "b";"a" ^{x_{3}}
\endxy
\]
Let $D=PC(G)$. As vector space, $D$ is generated by the set
\[
 \{q\mid\;q\mbox{ is a subpath of }(x_1x_2x_3)^n\mbox{ for some }n\geq1\},
\]
where, for any $n\geq1$, $(x_1x_2x_3)^n$ is defined recursively in the usual way.

\vskip.25cm

We claim that \emph{$D$ is prime}. Let $A$ and $B\subseteq{D}$ be two subcoalgebras and
$D\subseteq{A\wedge{B}}$. We assume $(x_1x_2x_3)^n\notin{B}$ for some $n\geq1$. We
proceed as follows: since $(x_1x_2x_3)^{n+h}\in{D}$ and
\[
 \Delta((x_1x_2x_3)^{n+h})=(x_1x_2x_3)^h\otimes(x_1x_2x_3)^n+\mathit{other\; terms},
\]
$(x_1x_2x_3)^h\in{A}$ as $(x_1x_2x_3)^n\notin{B}$. This is true for every $h\geq1$,
hence $D\subseteq{A}$.
\end{example}

In the above example we obtain that $D$ is the path coalgebra of a cycle. Therefore one
could wonder if any prime path coalgebra must be the path coalgebra of a set of cycles.
In order to explore this question we need to deepen into the graph structure.

\begin{theorem}\label{th:33}
Let $G$ be a quiver, then the following statements are equivalent:
\begin{enumerate}[(a)]
\item
$PC(G)$ is prime.
\item
$G$ is strongly connected.
\end{enumerate}
\end{theorem}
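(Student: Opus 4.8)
The plan is to prove both implications directly, using the structural results of Section~\ref{se:2} and Theorem~\ref{th:31}. For the implication (b) $\Rightarrow$ (a), I would assume $G$ is strongly connected and let $A$, $B$ be subcoalgebras of $PC(G)$ with $PC(G) = A\wedge B$. By Lemma~\ref{le:030527a} it suffices to work with the wedge inclusion, but here we have equality, so in particular $V(A)\cup V(B) = V(G)$ by the Proposition describing $V(A\wedge B)$. Suppose $PC(G)\neq B$; then there is a vertex or a path not in $B$, and using Theorem~\ref{th:22} together with Proposition~\ref{pr:22} one reduces to the case of a path $p\notin B$. I would then argue as in the length-three cycle example: since $G$ is strongly connected, for each arrow $x\in E$ there is a cycle through $p$ and through $x$ — concretely, pick a path from $t(p)$ back to $s(p)$ (strong connectivity), forming a cycle $c$ with $p\preceq c$, and then arbitrarily long iterates $c^h p$ lie in $PC(G)$. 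Looking at $\Delta(c^h p) = c^h\otimes p + (\text{other terms})$ and using that $p\notin B$ forces $c^h\in A$; letting $h$ grow and invoking Proposition~\ref{pr:030411} (closure under subpaths) shows every arrow, hence every path, lies in $A$, so $PC(G) = A$. The one subtlety is handling the case where the element witnessing $p\notin B$ is a genuine linear combination rather than a single path, which is exactly what Lemma~\ref{le:22} and the argument of the Corollary above are designed to absorb.

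For the implication (a) $\Rightarrow$ (b), I would prove the contrapositive: if $G$ is not strongly connected, I exhibit subcoalgebras $A\neq PC(G)\neq B$ with $PC(G) = A\wedge B$. Since $G$ is not strongly connected, there exist vertices $a$ and $b$ such that there is no path from $a$ to $b$. Define $A$ to be the span of all paths $p$ with $p$ \emph{not passing through} $a$ as a proper-or-improper starting segment in a suitable sense — more precisely, let $A$ be spanned by all paths $q$ such that there is no path from $a$ to $b$ refining $q$ on the right, and dually let $B$ be spanned by paths that cannot be extended on the left to reach from $a$ to $b$; a cleaner choice is $A = k\cdot\{p : a\notin V(\text{suffixes of }p\text{ reaching }b\text{-side})\}$. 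I expect the right definition to be: $A$ = span of paths not of the form (path ending at $a$)$\cdots$, tuned so that $A$ and $B$ are subcoalgebras (closed under the comultiplication, which by the path-coalgebra formula means closed under taking left and right factors). The key point is that any path in $G$ either fails to "start the obstruction at $a$" or fails to "end it at $b$", because no path joins $a$ to $b$; hence $\Delta(p)\in A\otimes PC(G) + PC(G)\otimes B$ for every path $p$, giving $PC(G) = A\wedge B$, while neither $A$ nor $B$ is everything since the trivial paths $a$ and $b$ are missing from one of them.

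The main obstacle I anticipate is the (a) $\Rightarrow$ (b) direction: choosing the subcoalgebras $A$ and $B$ so that (i) both are genuinely subcoalgebras of $PC(G)$ — this requires their spanning sets of paths to be closed under left factors and right factors respectively, which is delicate — and (ii) every path $p$ of $G$ decomposes so that $\Delta(p)$ lands in $PC(G)\otimes B + A\otimes PC(G)$. The correct formalization is likely to split $V(G)$ using the partial preorder $v\leq w \iff$ there is a path from $v$ to $w$; failure of strong connectedness means this preorder is not "all-related", and one picks $A$ to be spanned by paths $p$ with $s(p)\not\geq a$ and $B$ spanned by paths $p$ with $t(p)\not\leq b$ (or a similar asymmetric choice). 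Verifying the wedge identity then amounts to the combinatorial fact that a path from somewhere $\geq a$ to somewhere $\leq b$ would itself produce a path $a\to b$, contradicting the hypothesis. Once the set-up is correct, the verification is routine bookkeeping with the coproduct formula, so the real work is pinning down $A$ and $B$.
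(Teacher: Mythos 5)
Your plan splits into two halves, and each has a genuine problem. For (b) $\Rightarrow$ (a), the reduction to a single path $p\notin B$ and the cycle trick with $\Delta(c^hp)=c^h\otimes p+\cdots$ is in the spirit of the paper's argument, but your concluding inference fails: showing that every \emph{arrow} of $G$ lies in $A$ does not give $PC(G)=A$, because a subcoalgebra is not determined by the arrows it contains (the span of all vertices and arrows is already a proper subcoalgebra of $PC(G)$ containing every arrow whenever $G$ has composable arrows). To close the argument you must run the cycle trick for an arbitrary path $q$: by strong connectivity there is a cycle $c$ based at $s(p)$ passing through both $p$ and $q$, and then $c\in A$ (from the summand $c\otimes p$ in $\Delta(cp)$, using $p\notin B$, exactly the kind of step the paper also makes) together with Proposition~\ref{pr:030411} gives $q\in A$; the powers $c^h$ are not needed. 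The paper is more economical still: it picks one path $p_i\notin A$ and one path $p_j\notin B$, joins them by a path $p$, and reads the contradiction off $\Delta(p_ipp_j)\in A\otimes C+C\otimes B$.

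For (a) $\Rightarrow$ (b) your contrapositive construction is left genuinely open, and the candidate definitions you write down do not work: the span of the paths $p$ with $s(p)$ not reachable from $a$ (or $t(p)$ not reaching $b$) is not a subcoalgebra, since a span of paths is a subcoalgebra exactly when the set is closed under \emph{all} subpaths (every cut of $p$ contributes both its left and its right factor to $\Delta(p)$), and a subpath of such a $p$ can perfectly well have its source reachable from $a$; your remark that one-sided factor closure would suffice for $A$ and $B$ is wrong for the same reason. The idea can be repaired by taking $A$ to be the span of all paths none of whose vertices is reachable from $a$, and $B$ the span of all paths none of whose vertices reaches $b$: both are proper subcoalgebras, and if some cut $p=p'p''$ had $p'\notin A$ and $p''\notin B$ one would splice together a path $a\to u\to w\to b$, contradicting the choice of $a,b$; hence $PC(G)=A\wedge B$ and $PC(G)$ is not prime. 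But you did not reach this, and you flagged it yourself as the unresolved main obstacle, so as it stands this half is a gap. Note also that the paper proves this direction by a different and shorter route that avoids any such construction: primeness gives indecomposability, hence $G$ is connected, and Theorem~\ref{th:31} provides, for each reversed arrow $x_i^{-1}$ in a walk from $a$ to $b$, a cycle $c_i=p_ix_iq_i$ through $x_i$, whose complementary piece $q_ip_i$ replaces $x_i^{-1}$ and turns the walk into a directed path.
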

\begin{proof}
(a) $\Rightarrow$ (b). %
Note that, if $PC(G)$ is prime, it is indecomposable. Hence $G$ is connected, now, for
any $a$, $b\in{V}$, there exists a walk
$x_1^{\varepsilon_1}\cdots{x_r^{\varepsilon_r}}$
from $a$ to $b$.
For every index $i$ such that $\varepsilon_i=-1$, by Theorem~\ref{th:31}, we can
consider a cycle $c_i=p_ix_iq_i$ passing through $x_i$. Then $q_ip_i$ is a path from
$t(x_i)$ to $s(x_i)$. So, we obtain a path from $a$ to $b$.
\newline
(b) $\Rightarrow$ (a). %
Let us assume $PC(G)$ is not prime, then there are two subcoalgebras $A$ and $B$ such
that $PC(G)=A\wedge{B}$, $A\subsetneqq{PC(G)}$ and $B\subsetneqq{PC(G)}$. Since
$A\subsetneqq{PC(G)}$, there exists $d\in{PC(G)}\setminus{A}$. If we assume
$d=\sum_{i=1}^t\lambda_ip_i$, for some paths $p_1$, \ldots, $p_t$, then there is some
$p_i\in{PC(G)}\setminus{A}$. In the same way we show that there is some
$p_j\in{PC(G)}\setminus{B}$. Since $G$ is strongly connected, there is a path $p$ such
that $s(p)=t(p_i)$ and $t(p)=s(p_j)$. Hence $p_ipp_j\in{PC(G)}=A\wedge{B}$. Now
\[
 \Delta(p_ipp_j)=p_ip\otimes{p_j}+p_i\otimes{pp_j}+\mathit{other\; terms}.
\]
Hence $p_ip\in{A}$ as $p_j\notin{B}$, therefore $p_i\in{A}$, which is a contradiction.
As a consequence, $PC(G)$ is prime.
\end{proof}

Now the problem is to determine all prime subcoalgebras of $PC(G)$. Let us start
with an
example.

\begin{example}
We consider the quiver $G$ given by
\[
\xy
 (0,0)*+{a}="a",
 (40,0)*+{b}="b"
 \ar@/^4ex/ "a";"b" ^x
 \ar@/_-4ex/ "b";"a" ^z
 \ar@(ur,dr) "b";"b" ^y
\endxy
\]
and $D$ the subcoalgebra of $PC(G)$ generated by $\{(xyz)^n\mid\;n\in\mathbb{N}\}$. It
is clear that $D$ is prime. The coalgebra $k\cdot{P(D)}$ is also prime and it is not a
path coalgebra, i.e., $k\cdot{P(D)}\neq{PC(G(D))}$.
\end{example}

We may give a characterization of certain prime subcoalgebras of a path coalgebra.

\begin{theorem}\label{le:14a}
Let $D$ be a subcoalgebra of $PC(G)$ such that either $D$ has a basis constituted by
paths, or $P(D)$ is closed under concatenation. Then the following statements are
equivalent:
\begin{enumerate}[(a)]
\item
$D$ is prime.
\item
If $p_1$, $p_2\in{P(D)}$, then there exists $q\in{P(D)}$ such that $p_1$,
$p_2\preceq{q}$.
\end{enumerate}
\end{theorem}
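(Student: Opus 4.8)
The plan is to prove the two implications separately, using Theorem~\ref{th:31} for the forward direction and a direct wedge-decomposition argument for the converse, exactly in the spirit of the earlier proofs in this section.

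For (a) $\Rightarrow$ (b), suppose $D$ is prime and take $p_1, p_2 \in P(D)$. By the Corollary following Theorem~\ref{th:31} (applied with $s=1$ twice, or directly to an element of $D$ in which $p_1$ appears), there is a cycle $c_1 \in P(D)$ with $p_1 \preceq c_1$, and likewise a cycle $c_2 \in P(D)$ with $p_2 \preceq c_2$. Under either of the two standing hypotheses on $D$ one can then splice these together: if $P(D)$ is closed under concatenation, one first needs $c_1$ and $c_2$ to be composable, which is arranged by using that a cycle can be repeated and rotated so that its source equals any chosen vertex of $V(c_1)$ — in particular, since $PC(G(D))$ must be strongly connected here (this follows from Theorem~\ref{th:33} applied to the prime coalgebra, once one checks primeness passes to $k\cdot P(D)$, or more directly from Theorem~\ref{th:31} giving cycles through every path), there is a path in $P(D)$ from $t(c_1)$ to $s(c_2)$, and concatenating gives $q = c_1 w c_2 \in P(D)$ with $p_1, p_2 \preceq q$. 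If instead $D$ merely has a basis of paths, one argues the same way noting $P(D)$ is then closed under subpaths and the cycles obtained from the Corollary already lie in $P(D)$; the concatenation $c_1 w c_2$ is a subpath of a single cycle through $p_1$ of high enough multiplicity (reapply the Corollary to an element containing $p_1$, asking for multiplicity large enough that the cycle also sweeps past a vertex from which $p_2$'s cycle is reachable).

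For (b) $\Rightarrow$ (a), I would imitate the proof of Theorem~\ref{th:33}(b)$\Rightarrow$(a). Suppose $D$ is not prime, so $D = A \wedge^D B$ with $A, B \subsetneqq D$. Pick $d \in D \setminus A$; writing $d$ as a linear combination of paths and using Proposition~\ref{pr:030411} together with Lemma~\ref{le:22}/Theorem~\ref{th:22}, one extracts a path $p_1 \in P(D)$ with $p_1 \notin A$ — here one uses that under the hypotheses on $D$, membership of a path in the subcoalgebra can be tested path-by-path, and $A$ being a subcoalgebra is closed under subpaths of its own path-content. Similarly get $p_2 \in P(D) \setminus B$. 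By hypothesis (b) there is $q \in P(D)$ with $p_1, p_2 \preceq q$. Since $D = A \wedge^D B = \Delta^{-1}(D \otimes B + A \otimes D)$ and $q \in D$, applying $\Delta$ and separating the tensor factor carrying the "$p_1$-prefix / $p_2$-suffix" piece of $q$ forces either the prefix of $q$ up to $p_1$ to lie in $A$ (hence $p_1 \in A$ by subpath-closure) or the suffix from $p_2$ onward to lie in $B$ (hence $p_2 \in B$) — contradiction either way. I would phrase this by choosing $q = r_1 p_1 r_2 p_2 r_3$ and looking at the term $(r_1 p_1 r_2) \otimes (p_2 r_3)$ in $\Delta(q)$.

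The main obstacle is the interface between the two alternative hypotheses ("basis of paths" versus "$P(D)$ closed under concatenation") and the conclusion: I expect the cleanest route is to observe first that in either case $k\cdot P(D)$ is a subcoalgebra and is prime iff $D$ is (using Lemma~\ref{le:030527a} and that $D \subseteq k\cdot P(D) \subseteq PC(G(D))$ with matching path-content), thereby reducing to the path-coalgebra-like setting where the wedge computations of the second Proposition of Section~\ref{se:2} apply verbatim. The delicate point is making the "extract a path not in $A$" step rigorous when $D$ does not literally have a basis of paths — this is exactly where the concatenation-closure hypothesis is needed, via the Remark after Example~\ref{ex:15}, to guarantee that the relevant prefixes and suffixes of $q$ are themselves in $P(D)$ and hence testable against $A$ and $B$.
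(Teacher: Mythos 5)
Your forward direction (a)~$\Rightarrow$~(b) has a genuine gap at the splicing step. From Theorem~\ref{th:31} you obtain cycles $c_1,c_2\in P(D)$ with $p_1\preceq c_1$, $p_2\preceq c_2$, but to form $q=c_1wc_2$ you need a connecting path $w\in P(D)$ from $t(c_1)$ to $s(c_2)$, and nothing you invoke supplies it: primeness of $k\cdot P(D)$ is exactly Corollary~\ref{co:14}, which the paper deduces \emph{from} the theorem you are proving (so using it here is circular), and it is not otherwise evident; Theorem~\ref{th:33} concerns path coalgebras, and even granting that $G(D)$ is strongly connected, the connecting path lives in $G(D)$ and need not belong to $P(D)$, so $c_1wc_2\in P(D)$ does not follow even with concatenation closure; and the ``high multiplicity'' variant for the path-basis case is not an argument, since a cycle passing through $p_1$ many times need not come anywhere near $p_2$. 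The paper's route avoids all of this and is what you should use: as in the Corollary to Theorem~\ref{th:31}, let $A$ (resp.\ $B$) be the span of all paths not having $p_1$ (resp.\ $p_2$) as a subpath; these are subcoalgebras, $D\nsubseteq A$ and $D\nsubseteq B$ because $p_1,p_2\in P(D)$, so primeness (via Lemma~\ref{le:030527a}) gives $D\nsubseteq A\wedge B\supseteq A+B$, and any $\alpha\in D\setminus(A+B)$ must involve a path $q$ with $p_1,p_2\preceq q$.

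For (b)~$\Rightarrow$~(a), in the case where $D$ has a basis of paths your argument is essentially the paper's Case~(1): paths $p_1\in D\setminus A$, $p_2\in D\setminus B$, a common superpath $q\in P(D)\subseteq D$, and the claim $q\notin A\wedge B$. (Your term-by-term inference that a single summand $(r_1p_1r_2)\otimes(p_2r_3)$ of $\Delta(q)$ forces $r_1p_1r_2\in A$ or $p_2r_3\in B$ is not valid without a cancellation/linear-independence argument in $C/A\otimes C/B$; the paper makes the same unproved assertion, so this is a shared weakness rather than a new one.) The real gap is the second alternative hypothesis: when $D$ has no basis of paths and only $P(D)$ is closed under concatenation, your plan collapses, since a path of $P(D)$ need not lie in $D$, so neither ``$q\in D$'' nor ``pick a path $p_1\in D\setminus A$'' is available; you acknowledge this as the delicate point but offer no construction. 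The paper's Case~(2) uses a different device: by Theorem~\ref{th:22} it chooses combinations $\sum_i\lambda_ip_i\in D\setminus A$ and $\sum_j\mu_jq_j\in D\setminus B$ with common source and tail, a connecting combination $\sum_k\eta_kh_k\in D$ coming from (b), and forms $\alpha=\sum_{ikj}\lambda_i\eta_k\mu_j\,p_ih_kq_j$, asserted to lie in $D$ (this is where concatenation closure enters) but not in $A\wedge B$; your proposal produces no element of $D$ outside $A\wedge B$ in this case, so that half of the equivalence is not established. Finally, your proposed reduction ``$k\cdot P(D)$ is prime iff $D$ is prime'' cannot be used: the implication from $k\cdot P(D)$ to $D$ is false in general (see the example following Corollary~\ref{co:14}, where $D=A\wedge B$ is not prime although $k\cdot P(D)=PC(G)$ is prime), and under the concatenation-closure hypothesis it is essentially the statement to be proved, not an observation.
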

\begin{proof}
(a) $\Rightarrow$ (b). It is a consequence of Theorem~\ref{th:31} above.
\newline
(b) $\Rightarrow$ (a). %
Case (1). \textsl{$D$ has a  basis constituted by paths.}
\newline
Let $A$, $B\subseteq{PC(G)}$ be two subcoalgebras such that $D\nsubseteq A$ and
$D\nsubseteq B$. Under our hypothesis, we may obtain paths $p\in{D}\setminus{A}$ and
$h\in{D}\setminus{B}$. By Theorem~\ref{th:31}, there exists a path $q$ passing through
$p$ and $h$. Actually we may assume $q=pch$ for some path $c$. Then $pch\in{D}$ and
$pch\notin{A\wedge{B}}$, so $D\nsubseteq{A\wedge{B}}$.
\newline
Case (2). \textsl{$P(D)$ is closed under concatenation.}
\newline
Let $D\subseteq{A\wedge{B}}$ and suppose that $A\subsetneqq{D}$ and $B\subsetneqq{D}$.
There exists a path $p_0\in{P(D)}\setminus{P(A)}$ and a linear combination of paths
with
non zero coefficients $\sum_{i=0}^t\lambda_ip_i\in{D}\setminus{A}$ where the paths
$p_i$
have common source and common tail. Similarly, there exist $q_0$ and $\sum_j\mu_jq_j$
with the same properties with respect to $B$. Hence, by hypothesis, there exists
$h_0\in{P(D)}$ such that $\sum_k\eta_kh_k\in{D}$. Let us consider the element
$\alpha:=\sum_{ikj}\lambda_i\eta_k\mu_jp_ih_kq_j$, then
$\alpha\in{D}\subseteq{A\wedge{B}}$. By construction,
$\Delta(\sum_{ikj}\lambda_i\eta_k\mu_jp_ih_kq_j)\notin{A\otimes{PC(G)}+PC(G)\otimes{B}}$,
which is a contradiction.
\end{proof}

\begin{corollary}\label{co:14}
Let $D$ be a prime subcoalgebra of $PC(G)$, then $PC(G(D))$  and $k\cdot{P(D)}$ are
also
prime.
\end{corollary}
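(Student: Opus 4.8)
The plan is to treat the two coalgebras separately, each through a characterization already established: Theorem~\ref{th:33} for $PC(G(D))$, and Theorem~\ref{le:14a} for $k\cdot P(D)$.

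For $PC(G(D))$ I would show that $G(D)$ is strongly connected and then apply Theorem~\ref{th:33}. This splits into two steps. Step~1: \emph{$G(D)$ is connected.} A prime coalgebra is indecomposable, so it suffices to note that a disconnection $G(D)=G_1\sqcup G_2$ into two non-empty subquivers would give $PC(G(D))=PC(G_1)\oplus PC(G_2)$, and then every element of the basis of $D$ provided by Theorem~\ref{th:22} (whose members are combinations of paths with common source and common tail) lies entirely in one of $PC(G_1)$, $PC(G_2)$; hence $D=(D\cap PC(G_1))\oplus(D\cap PC(G_2))$, a decomposition into non-zero subcoalgebras (each $G_i$ carries a vertex, and vertices of $V(D)$ lie in $D$ by Proposition~\ref{pr:22}), contradicting indecomposability. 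Step~2: \emph{$G(D)$ is strongly connected.} Given $a,b\in V(D)$, fix a walk $x_1^{\varepsilon_1}\cdots x_r^{\varepsilon_r}$ in $G(D)$ from $a$ to $b$. For each $i$ with $\varepsilon_i=-1$ the arrow $x_i$ lies in $E(D)\subseteq P(D)$, so Theorem~\ref{th:31} applied to $x_i$ with $n=2$ yields a cycle $c_i=u_ix_iw_ix_iz_i\in P(D)$; the middle factor $w_i$ is a path from $t(x_i)$ to $s(x_i)$, and since $w_i\preceq c_i\in P(D)$ and $P(D)$ is closed under subpaths, $w_i\in P(D)$, so all arrows of $w_i$ lie in $E(D)$ and $w_i$ is a path of $G(D)$. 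Replacing each $x_i^{-1}$ by $w_i$ and concatenating produces a path of $G(D)$ from $a$ to $b$; thus $G(D)$ is strongly connected and $PC(G(D))$ is prime.

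For $k\cdot P(D)$ I would apply Theorem~\ref{le:14a}. Since $P(k\cdot P(D))=P(D)$, this coalgebra has a basis of paths, so Theorem~\ref{le:14a} is available and it is enough to verify its condition~(b): every pair $p_1,p_2\in P(D)$ has a common $\preceq$-upper bound $q\in P(D)$. I would obtain this by the device used in the Corollary following Theorem~\ref{th:31}: let $A$ (respectively $B$) be the subcoalgebra of $PC(G)$ with basis $\{p\mid p_1\npreceq p\}$ (respectively $\{p\mid p_2\npreceq p\}$); as $p_1,p_2\in P(D)$ we have $D\nsubseteq A$ and $D\nsubseteq B$, so primeness of $D$ forces $D\nsubseteq A\wedge B$, hence $D\nsubseteq A+B$; expanding an element of $D\setminus(A+B)$ in the path basis and grouping terms by whether they pass through $p_1$ shows that some term $q$ satisfies $p_1,p_2\preceq q$, and necessarily $q\in P(D)$. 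Hence condition~(b) holds and $k\cdot P(D)$ is prime.

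The incidental facts invoked — that the displayed spans are subcoalgebras, that $P(D)$ is closed under subpaths, that $V(D)\subseteq D$, and that prime coalgebras are indecomposable — are all recorded in Sections~1 and~\ref{se:2}, so the write-up is mostly bookkeeping. The one genuinely delicate point is Step~2: the presence of a reversed arrow $x_i^{-1}$ in a walk does not of itself supply a return route inside $G(D)$, which is precisely why Theorem~\ref{th:31} must be used with multiplicity $n=2$ (so that the return segment appears as an honest subpath of a cycle of $P(D)$) rather than with $n=1$. I expect this to be the only real obstacle.
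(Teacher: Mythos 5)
Your proof is correct and follows essentially the same route as the paper, whose proof of Corollary~\ref{co:14} is just the one-line citation of Theorems~\ref{th:33} and~\ref{le:14a}; you supply the details (connectedness and strong connectivity of $G(D)$ via indecomposability and Theorem~\ref{th:31}, and the common-superpath condition for $k\cdot P(D)$ via the argument of the corollary to Theorem~\ref{th:31}) that the paper leaves implicit. One cosmetic remark: in your Step~2 the multiplicity $n=2$ is not actually needed, since already for $n=1$ a cycle $c_i=p_ix_iq_i\in P(D)$ yields the return route $q_ip_i$, which is a path of the quiver $G(D)$ (its arrows lie in $E(D)$ because $p_i,q_i\preceq c_i$) even though it need not itself belong to $P(D)$ --- this is exactly how the paper argues inside Theorem~\ref{th:33} --- but your $n=2$ variant is equally valid.
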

\begin{proof}
It is a direct consequence of Theorems~\ref{th:33} and~\ref{le:14a}.
\end{proof}

Unfortunately the converse is not true, as the next example shows.

\begin{example}
Let us consider the quiver $G$ given by
\[
\xy
 (0,0)*+{a}="a"
 \ar@(ul,u)^x "a";"a"
 \ar@(ur,r)^y "a";"a"
 \ar@(dr,d)^z "a";"a"
 \ar@(dl,l)^t "a";"a"
\endxy
\]
and $A$ and $B$ the coalgebras generated by $\{a+y,z+t\}$ and $\{y+z\}$,
respectively. If
we take $D=A\wedge{B}$ then $D$ is not prime, nevertheless
$k\cdot{P(D)}=PC(G(D))=PC(G)$
is a prime coalgebra.
\end{example}

Those results can be also applied to study prime subcoalgebras of an arbitrary pointed
coalgebra. Indeed, if $C$ is a pointed coalgebra, we may consider the quiver $G$ whose
vertices are the group like elements of $C$ and whose arrows are given by the skew
primitive elements. Then $C$ is embedded, as a subcoalgebra, into the path coalgebra
$PC(G)$. See \cite{Woodcock:1997}.

Nevertheless, the problem of characterizing prime subcoalgebras of path coalgebras is
still open. We study this problem from a different point of view in the next section.

\section{Prime coalgebras and idempotent elements.}

Throughout this section $C$ will be a coalgebra and $PC(G)$ a path coalgebra. Let
$e\in{C^\ast}$ be a non zero idempotent element, the vector space $e\cdot{C}\cdot{e}$
has a coalgebra structure (see \cite{Cuadra/Gomez:2002,Radford:1982}) given  by:
\[
 \Delta(\eC{x})=\sum(\eC{x_1})\otimes(\eC{x_2}),
 \quad
 \varepsilon(\eC{x})=e(\eC{x})\mbox{ for any }x\in{C}.
\]

There always exists a linear map $\phi:{C}\longrightarrow\eC{C}$ defined by
$\phi(x)=\eC{x}$ for any $x\in{C}$. This map $\phi$ satisfies
\[
 \Delta_{\eC{C}}\phi(c)=(\phi\otimes\phi)\Delta_C(c)\quad\mbox{for any }c\in{C}.
\]
As a consequence, we obtain the following result:

\begin{lemma}
\begin{enumerate}[(1)]
\item
If $A\subseteq{C}$ is a subcoalgebra, then $\phi(A)\subseteq{\eC{C}}$ is a
subcoalgebra.
\item
If $H\subseteq\eC{C}$ is a subcoalgebra, then $\phi^{-1}(H)\subseteq{C}$ is a
subcoalgebra.
\item
If $H$, $L\subseteq\eC{C}$ are subcoalgebras, then
$\phi^{-1}(H\wedge{L})\subseteq\phi^{-1}(H)\wedge\phi^{-1}(L)$.
\item
If $A$, $B\subseteq{C}$ are subcoalgebras, then
$\phi(A\wedge{B})\subseteq\phi(A)\wedge\phi(B)$.
\end{enumerate}
\end{lemma}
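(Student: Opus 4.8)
The whole argument will be driven by the single structural fact already isolated before the statement: $\phi$ is a \emph{$\Delta$-morphism}, i.e. $\Delta_{\eC{C}}\circ\phi=(\phi\otimes\phi)\circ\Delta_C$. I will add two elementary remarks to it. First, since $e$ is idempotent in $C^\ast$, the map $\phi$ is an idempotent linear endomorphism of $C$ (one checks $\phi\circ\phi=\phi$ directly from $e\ast e=e$), with image $\eC{C}$; hence $C=\eC{C}\oplus\Ker\phi$ as vector spaces, $\phi$ is the projection onto $\eC{C}$ along $\Ker\phi$, and therefore $\phi^{-1}(H)=H\oplus\Ker\phi$ for every subspace $H\subseteq\eC{C}$. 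Second, I will use the standard duality dictionary: a subspace $D\subseteq C$ is a subcoalgebra if and only if its annihilator $D^{\perp}$ is a two-sided ideal of $(C^\ast,\ast)$, together with the description $A\wedge^C B=\Delta_C^{-1}(A\otimes C+C\otimes B)$.

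Parts (1) and (4) are then immediate. For (1), $\phi(A)$ is a subspace and $\Delta_{\eC{C}}(\phi(A))=(\phi\otimes\phi)(\Delta_C(A))\subseteq(\phi\otimes\phi)(A\otimes A)=\phi(A)\otimes\phi(A)$, so $\phi(A)$ is a subcoalgebra. For (4), take $c\in A\wedge^C B$, so $\Delta_C(c)\in A\otimes C+C\otimes B$; applying $\phi\otimes\phi$ and using the $\Delta$-morphism property gives $\Delta_{\eC{C}}(\phi(c))\in\phi(A)\otimes\eC{C}+\eC{C}\otimes\phi(B)$, i.e. $\phi(c)\in\phi(A)\wedge^{\eC{C}}\phi(B)$ (meaningful by (1)). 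Hence $\phi(A\wedge B)\subseteq\phi(A)\wedge\phi(B)$.

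Part (3) I would deduce from (2). Once $\phi^{-1}(H)$ and $\phi^{-1}(L)$ are known to be subcoalgebras, split $C\otimes C=(\eC{C}\oplus\Ker\phi)^{\otimes 2}$ into its four summands. For $c\in\phi^{-1}(H\wedge^{\eC{C}} L)$, write $\Delta_C(c)$ accordingly: the $\eC{C}\otimes\eC{C}$ component equals $(\phi\otimes\phi)\Delta_C(c)=\Delta_{\eC{C}}(\phi(c))\in H\otimes\eC{C}+\eC{C}\otimes L$, while each of the other three components lies in $\Ker\phi\otimes C+C\otimes\Ker\phi\subseteq\phi^{-1}(H)\otimes C+C\otimes\phi^{-1}(L)$. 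Adding these up, $\Delta_C(c)\in\phi^{-1}(H)\otimes C+C\otimes\phi^{-1}(L)$, that is, $c\in\phi^{-1}(H)\wedge^C\phi^{-1}(L)$.

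The real content, and the step I expect to be the obstacle, is (2). One cannot argue purely formally here: $\phi$ does \emph{not} intertwine the counits (in fact $\varepsilon_{\eC{C}}\circ\phi=e$, not $\varepsilon_C$), and for a bare $\Delta$-morphism the preimage of a subcoalgebra need not be a subcoalgebra, so the explicit form $\phi(c)=e\cdot c\cdot e$ and the idempotency of $\phi$ must be used. My plan is to pass to the dual side: in $C^\ast$-bimodule language $\eC{C}$ pairs with the corner algebra $\eC{C^\ast}\cong(\eC{C})^\ast$, and the subcoalgebra $H$ corresponds to an ideal $H^{\perp}$ of $\eC{C^\ast}$; then, combining $\phi^{-1}(H)=H\oplus\Ker\phi$ with the Peirce decomposition of $C^\ast$ relative to the idempotent $e$, one computes $\phi^{-1}(H)^{\perp}$ and checks it is a two-sided ideal of the \emph{whole} algebra $C^\ast$. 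Equivalently one may first reduce to finite dimensions via the fundamental theorem of coalgebras — replacing $C$ by a finite-dimensional subcoalgebra $E$ containing a prescribed element, $\eC{C}$ by $\phi(E)$ (a subcoalgebra by (1)) and $H$ by $H\cap\phi(E)$ — and dualize there. Verifying that the resulting subspace of $C^\ast$ really is an ideal of the full dual algebra, and not merely of the corner, is the delicate point on which the argument turns, and is exactly where the corner structure of $e$ has to be exploited.
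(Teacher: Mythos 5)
Parts (1) and (4) of your argument coincide with the paper's proof, and your treatment of (3) is, if anything, more careful than the paper's one‑line justification: once $\phi$ is identified as the projection onto $\eC{C}$ along $\Ker\phi$, and one notes $\Ker\phi\subseteq\phi^{-1}(H)\cap\phi^{-1}(L)$, the componentwise computation gives $\Delta_C(c)\in\phi^{-1}(H)\otimes C+C\otimes\phi^{-1}(L)$ directly. Observe, moreover, that this argument nowhere uses that $\phi^{-1}(H)$ or $\phi^{-1}(L)$ is a subcoalgebra, so (3), read as a statement about subspaces, does not depend on (2) at all, contrary to your phrasing ``I would deduce (3) from (2)''.

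The genuine gap is (2): you never prove it, only outline a dualization plan and flag its delicate point. Your suspicion is correct in the strongest sense: statement (2) is false, so the plan cannot be completed. Take $G$ with vertices $a,b,c$ and a single arrow $x\colon a\to b$, let $C=PC(G)$ and let $e\in C^{\ast}$ be the characteristic function of $\{a,c\}$; then $e$ is idempotent, $\phi(a)=a$, $\phi(c)=c$, $\phi(b)=\phi(x)=0$, so $\eC{C}=ka+kc$. For the subcoalgebra $H=kc$ one gets $\phi^{-1}(H)=kb+kc+kx$, which contains $x$ but not $a$, while $\Delta(x)=a\otimes x+x\otimes b\notin\phi^{-1}(H)\otimes\phi^{-1}(H)$; hence $\phi^{-1}(H)$ is not a subcoalgebra. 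Dually, $\phi^{-1}(H)^{\perp}=ka^{\ast}$ is not a two‑sided ideal of $C^{\ast}$ (convolving $a^{\ast}$ with $x^{\ast}$ yields $x^{\ast}$), which is exactly where your Peirce‑decomposition check would break down. For comparison, the paper's own proof of (2) fails at the same spot: it invokes the equality $(\phi\otimes\phi)^{-1}(H\otimes H)=\phi^{-1}(H)\otimes\phi^{-1}(H)$, which is wrong because the left‑hand side contains all of $\Ker\phi\otimes C+C\otimes\Ker\phi$ (in the example it contains $\Delta(x)$). So the tenable content of the Lemma is (1), (4) as you proved them and (3) by your subspace argument, while (2) must be dropped or weakened (for instance to $\phi^{-1}(H)\supseteq\Ker\phi$ and $\phi(\phi^{-1}(H))=H$); note that this also affects the later use of the Lemma, since the proof of Proposition~\ref{pr:18} applies primeness of $D$ to $\phi^{-1}(H)$ and $\phi^{-1}(L)$ as if they were subcoalgebras of $C$.
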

\begin{proof}
(1). %
For any $a\in{A}$, we have:
\[
 \Delta_{\eC{C}}(\phi(a))
 =(\phi\otimes\phi)\Delta_C(a)
 =\sum\phi(a_1)\otimes\phi(a_2)
 \in\phi(A)\otimes\phi(A).
\]
(2). %
For any $x\in\phi^{-1}(H)$, we have $\phi(x)\in{H}$, then
$(\phi\otimes\phi)\Delta_C(x)=\Delta_{\eC{C}}(\phi(x))\in{H\otimes{H}}$, hence
$\Delta_C(x)\in(\phi\otimes\phi)^{-1}(H\otimes{H})=\phi^{-1}(H)\otimes\phi^{-1}(H)$.
\newline (3). %
Let $x\in\phi^{-1}(H\wedge{L})$, then $\phi(x)\in{H\wedge{L}}$, or equivalently
$\Delta_{\eC{C}}(\phi(x))=H\otimes\eC{C}+\eC{C}\otimes{L}$, hence
$\Delta_C(x)\in\phi^{-1}(H)\otimes{C}+C\otimes\phi^{-1}(L)$, and we obtain
$x\in\phi^{-1}(H)\wedge\phi^{-1}(L)$.
\newline (4). %
For any $x\in{A\wedge{B}}$, we have $\Delta_C(x)\in{A\otimes{C}}+C\otimes{B}$, then
$\Delta_{\eC{C}}(\phi(x))=(\phi\otimes\phi)\Delta_C(x)\in\phi(A)\otimes\phi(C)+\phi(C)\otimes\phi(B)$.
\end{proof}

By applying the previous Lemma we may prove the following Proposition.

\begin{proposition}\label{pr:18}
Let $D$ be a prime subcoalgebra of $C$, then, for any non zero idempotent element
$e\in{C^\ast}$, we have $\eC{D}\subseteq\eC{C}$ is a prime subcoalgebra.
\end{proposition}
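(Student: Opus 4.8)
The plan is to use Lemma~\ref{le:030527a}(b) as the working definition of primeness. So I take subcoalgebras $H$, $L\subseteq\eC{C}$ with $\eC{D}\subseteq H\wedge^{\eC{C}}L$, and I must show $\eC{D}\subseteq H$ or $\eC{D}\subseteq L$. The natural move is to pull everything back along the comultiplicative map $\phi:C\to\eC{C}$, using parts (2) and (3) of the Lemma just proved: $\phi^{-1}(H)$ and $\phi^{-1}(L)$ are subcoalgebras of $C$, and $\phi^{-1}(H\wedge L)\subseteq\phi^{-1}(H)\wedge^C\phi^{-1}(L)$.

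The key remaining point is to relate $D$ to $\phi^{-1}(H\wedge L)$. Since $\phi(D)=\eC{D}\subseteq H\wedge L$, we get $D\subseteq\phi^{-1}(H\wedge L)\subseteq\phi^{-1}(H)\wedge^C\phi^{-1}(L)$. Now $D$ is prime in $C$, so by Lemma~\ref{le:030527a}(b) either $D\subseteq\phi^{-1}(H)$ or $D\subseteq\phi^{-1}(L)$. Applying $\phi$ and using $\phi(\phi^{-1}(H))\subseteq H$ (which holds for any map), in the first case $\eC{D}=\phi(D)\subseteq H$, and in the second $\eC{D}\subseteq L$. That is exactly what we wanted, and it also shows along the way that $\eC{D}$ is a subcoalgebra of $\eC{C}$ by part~(1) of the Lemma.

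The one subtlety I expect to be the main obstacle — though it is mild — is making sure the hypothesis of primeness is applied in the correct (two-sided-ideal style) formulation: Definition~\ref{de:030527} is stated with $C=A\wedge^C B$, but Lemma~\ref{le:030527a} upgrades this to the inclusion form $D\subseteq A\wedge^C B$, which is the form actually needed here since we only know $D\subseteq\phi^{-1}(H)\wedge^C\phi^{-1}(L)$, not equality. One should also double-check that $\eC{D}$ is genuinely a subcoalgebra of $\eC{C}$ and not merely a subspace, but that is immediate from Lemma~4.2(1) applied to $A=D$. No computation with the comultiplication is needed beyond the four functoriality statements already established.

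In summary, the proof is a short diagram-chase: primeness of $D$ in $C$, transported through $\phi^{-1}$, yields primeness of $\phi(D)=\eC{D}$ in $\eC{C}$, with the inclusion-form characterization of Lemma~\ref{le:030527a} doing all the work.
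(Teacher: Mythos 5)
Your proposal is correct and follows essentially the same route as the paper: pull $H$, $L$ back along $\phi$ via parts (2)--(3) of the lemma, obtain $D\subseteq\phi^{-1}(H)\wedge^{C}\phi^{-1}(L)$, apply primeness of $D$ in the inclusion form of Lemma~\ref{le:030527a}, and push forward with $\phi$. The paper's proof is the same diagram-chase, merely written more tersely (it uses $D\subseteq\phi^{-1}(\eC{D})$ in place of your $\phi(D)=\eC{D}$ step).
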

\begin{proof}
Let $H$, $L\subseteq\eC{C}$ be subcoalgebras such that $\eC{D}\subseteq{H\wedge{L}}$,
then
$$
 D\subseteq\phi^{-1}(\eC{D})\subseteq\phi^{-1}(H\wedge{L})\subseteq\phi^{-1}(H)\wedge\phi^{-1}(L).
$$
Since $D$ is prime, we obtain that either $D\subseteq\phi^{-1}(H)$ or
$D\subseteq\phi^{-1}(L)$. Hence either $\eC{D}\subseteq{H}$ or $\eC{D}\subseteq{L}$ and
$\eC{D}$ is prime.
\end{proof}

We would like to point out that primeness is a \textsl{local property}, in the sense
that in order to prove that a subcoalgebra $D$ of $PC(G)$ is prime we only need to
check
it for some special coalgebras, namely, those defined as $\eC{D}$ for certain
idempotent element
$e\in{PC(G)^*}$.

\begin{theorem}\label{th:42}
Let $D$ be a subcoalgebra of $PC(G)$. The following statements are equivalent:
\balf
\item
$D$ is prime;
\item
For any non zero idempotent element $e\in{PC(G)^\ast}$, defined as the characteristic
function of a set of two vertices, the coalgebra $\eC{D}\subseteq\eC{C}$ is prime.
\ealf
\end{theorem}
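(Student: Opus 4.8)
The plan is to prove the two implications separately, using Proposition~\ref{pr:18} for the easy direction and reducing to the path-subcoalgebra characterization of Theorem~\ref{le:14a} for the hard one.

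The implication (a) $\Rightarrow$ (b) is immediate: if $D$ is prime and $e\in PC(G)^*$ is any non zero idempotent (in particular the characteristic function of a two-element vertex set), then $\eC{D}$ is a prime subcoalgebra of $\eC{C}$ by Proposition~\ref{pr:18}. So the work is entirely in (b) $\Rightarrow$ (a). Here I would argue by contraposition. Suppose $D$ is not prime; by Lemma~\ref{le:030527a} there are subcoalgebras $A$, $B$ of $PC(G)$ with $D\subseteq A\wedge^C B$ but $D\not\subseteq A$ and $D\not\subseteq B$. Using Theorem~\ref{th:22}, pick an element $d=\sum_i\lambda_i p_i\in D\setminus A$ whose paths $p_i$ share a common source and tail, and similarly $d'=\sum_j\mu_j q_j\in D\setminus B$ with common source and tail; then pick a single path $p$ appearing in $d$ with $p\notin P(A)$ (such a path exists, since otherwise $d\in k\cdot P(A)$ — but one must be careful here: $d\notin A$ does not literally force a path of $d$ to lie outside $P(A)$, so instead I would use that $d\notin A$ means $\Delta(d)\notin A\otimes C+C\otimes A$ fails only via one of the $p_i$, an argument already used in the proof of Theorem~\ref{th:33}(b)$\Rightarrow$(a) and Theorem~\ref{le:14a}). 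Let $a=s(p)$, $b=t(q)$ where $q$ is the analogously chosen path outside $P(B)$, and let $e$ be the characteristic function of $\{a,b\}$, i.e. $e(a)=e(b)=1$ and $e$ vanishes on every other path.

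The key step is to check that $e$ is idempotent in $PC(G)^*$ — this holds because $a,b$ are orthogonal idempotents in $PC(G)^*$ (the convolution $a*a$ picks out $\Delta$-summands $a\otimes a$, which only occurs for the trivial path $a$, and $a*b=0$ since no path decomposes with $a$ on the left and $b$ on the right unless $t(a)=s(b)$, which for trivial paths means $a=b$) — and then to transport the failure of primeness from $D$ down to $\eC{D}$. For the latter I would apply $\phi$ and part (4) of the Lemma preceding Proposition~\ref{pr:18} to get $\phi(D)\subseteq\phi(A)\wedge\phi(B)$ inside $\eC{C}$; what remains is to verify $\phi(D)\not\subseteq\phi(A)$ and $\phi(D)\not\subseteq\phi(B)$. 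This is where the choice of $\{a,b\}$ matters: $\eC{p}$ is the component of $\Delta_2(p)$ supported on paths from $a$ to $b$, and one checks that $\eC{p}$ equals $p$ itself when $s(p)=a$, $t(p)=b$, while $\eC{(\cdot)}$ kills enough of $A$ that $p=\eC{p}\notin\phi(A)$.

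The main obstacle I expect is exactly this last point — ensuring that passing to $\eC{D}$ does not accidentally collapse $D\setminus A$ into $\phi(A)$, i.e. that the witness to non-primeness survives localization at a \emph{single} pair of vertices rather than needing the whole vertex set of $P(D)$. The cleanest way around it is to first reduce, via Theorem~\ref{th:22}, to homogeneous elements (fixed source and tail), which makes $\phi=\eC{(\cdot)}$ act almost as a projection onto the $(a,b)$-homogeneous component and hence essentially injective on the relevant part of $D$; then $p\in\eC{D}$, $p\notin\phi(A)$, and symmetrically for $q$, and Definition~\ref{de:030527} applied to the non-prime $\eC{D}$ completes the contrapositive. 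One should also record that $\eC{C}$ here is finite-dimensional-free but that plays no role; only the two-vertex support of $e$ is used.
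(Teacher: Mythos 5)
Your direction (a) $\Rightarrow$ (b) is the same as the paper's (it is just Proposition~\ref{pr:18}), but your (b) $\Rightarrow$ (a) has a genuine gap. Arguing by contraposition forces you to exhibit a \emph{single} two-vertex idempotent $e$ for which $\eC{D}$ fails to be prime, i.e.\ with $\eC{D}\subseteq\phi(A)\wedge\phi(B)$ while $\eC{D}\nsubseteq\phi(A)$ \emph{and} $\eC{D}\nsubseteq\phi(B)$ simultaneously. Your witnesses are homogeneous elements $d\in D\setminus A$, with source and tail $(a,b)$ say, and $d'\in D\setminus B$, with source and tail $(a',b')$ --- up to four distinct vertices --- and you take $e$ to be the characteristic function of the mixed pair $\{s(p),t(q)\}=\{a,b'\}$. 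But since $e$ is supported on vertices, for a homogeneous element $x$ one has $\eC{x}=x$ if \emph{both} $s(x)$ and $t(x)$ lie in the chosen pair, and $\eC{x}=0$ otherwise; with your mixed choice it may well happen that $b\notin\{a,b'\}$ and $a'\notin\{a,b'\}$, so $\eC{d}=0=\eC{d'}$ and neither witness survives the compression. You then cannot conclude $\eC{D}\nsubseteq\phi(A)$ or $\eC{D}\nsubseteq\phi(B)$, so non-primeness of $\eC{D}$ is not established. This is exactly the obstacle you flag at the end, but reducing to homogeneous elements via Theorem~\ref{th:22} does not remove it: the problem is not injectivity of $\phi$ on homogeneous pieces, it is that a two-vertex idempotent kills every homogeneous piece whose source--tail pair is not contained in those two vertices. (The side difficulty you mention, of finding a single path of $d$ lying outside $P(A)$, is real but turns out to be unnecessary.)

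The paper sidesteps all of this by arguing directly, using hypothesis (b) itself rather than its negation, and using only \emph{one} witness at a time. Given $D\subseteq A\wedge B$ and a homogeneous $x\in D\setminus A$, set $a=s(x)$, $b=t(x)$ and let $e$ be the characteristic function of $\{a,b\}$; then $x=\eC{x}\in\eC{D}\subseteq\eC{(A\wedge B)}\subseteq(\eC{A})\wedge(\eC{B})$, and primeness of $\eC{D}$ forces either $\eC{D}\subseteq\eC{A}\subseteq A$, which is impossible because $x\notin A$, or $\eC{D}\subseteq\eC{B}\subseteq B$, which places $x$ in $B$. Each application of the hypothesis thus involves a single homogeneous element whose \emph{own} source and tail provide the two vertices --- this is precisely why idempotents supported on two vertices suffice, and why no simultaneous two-witness idempotent is ever needed. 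To repair your argument you would have to abandon the attempt to encode both failures $D\nsubseteq A$ and $D\nsubseteq B$ into one idempotent and instead run this one-witness argument as the paper does.
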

\begin{proof}
We only need to prove that (b) implies (a). Indeed, let $A$, $B\subseteq{PC(G)}$ be two
subcoalgebras such that $D\subseteq{A\wedge{B}}$. Let us assume that there is an
element
$x\in{D}\setminus{A}$. Furthermore, we may assume that $x$ is a linear combination of
paths with common source and common tail. Let $a=s(x)$ and $b=t(x)$. If we consider
$e:PC(G)\longrightarrow{k}$ the characteristic function of $\{a,b\}$, then we obtain
\[
 x=\eC{x}\in\eC{D}\subseteq\eC{(A\wedge{B})}\subseteq(\eC{A})\wedge^{\eC{C}}(\eC{B}).
\]
Since, by hypothesis, $\eC{D}$ is prime then either
$x\in\eC{D}\subseteq\eC{A}\subseteq{A}$, which is a contradiction, or
$x\in\eC{D}\subseteq\eC{B}\subseteq{B}$. Hence $D\subseteq{B}$ and $D$ is prime.
\end{proof}

As a consequence, in order to check whether a subcoalgebra $D$ of $PC(G)$ is prime, it
is enough to check if $\C{e}{D}\subseteq\C{e}{PC(G)}$ is prime for any non zero
idempotent $e\in{PC(G)}^\ast$ defined by a set of two vertices.

\end{document}